 \newtheorem{thm}{Theorem}
 \newtheorem{cor}{Corollary}
 \newtheorem{lem}{Lemma}
 \newtheorem{prop}{Proposition}
 \theoremstyle{definition}
 \newtheorem{defn}{Definition}
 \theoremstyle{remark}
 \newtheorem{rem}{Remark}
 \newtheorem{ex}{Example}
\begin{document}

\title[Julia theory for slice regular functions]
{Julia theory for slice regular functions}

\author[G. B. Ren]{Guangbin Ren}
\address{Guangbin Ren, Department of Mathematics, University of Science and
Technology of China, Hefei 230026, China}

\email{rengb$\symbol{64}$ustc.edu.cn}

\author[X. P. Wang]{Xieping Wang}
\address{Xieping Wang, Department of Mathematics, University of Science and
Technology of China, Hefei 230026,China}

\email{pwx$\symbol{64}$mail.ustc.edu.cn}

\thanks{This work was supported by the NNSF  of China (11371337), RFDP (20123402110068).}

\keywords{Quaternions, Slice regular functions, Julia's lemma, Julia-Carath\'{e}odory theorem, Boundary Schwarz lemma, Burns-Krantz rigidity theorem, Hopf's lemma.}
\subjclass[2010]{Primary 30G35; Secondary 30C80, 32A40, 31B25.}

\begin{abstract}
Slice regular functions have been extensively studied over the past decade, but much less is known about their boundary behavior. In this paper, we initiate the study of  Julia theory for slice regular functions. More specifically, we establish the quaternionic versions of the Julia lemma, the Julia-Carath\'{e}odory theorem, the boundary Schwarz lemma, and the Burns-Krantz rigidity theorem  for slice regular self-mappings of the open unit ball $\mathbb B$ and of the right half-space $\mathbb H^+$. Our quaternionic boundary Schwarz lemma  involves a Lie bracket reflecting  the non-commutativity  of quaternions. Together with some explicit examples, it shows that the slice derivative of a slice regular self-mapping of  $\mathbb B$ at a boundary fixed point is not necessarily a positive real number, in contrast to
that in the complex case, meaning that its commonly believed version turns out to be totally wrong.

\end{abstract}

\maketitle

\section{Introduction}
The celebrated Julia lemma \cite{Julia}  and  the Julia-Carath\'{e}odory theorem \cite{Caratheodory1, Caratheodory2} for holomorphic self-mappings of the open unit disc $\mathbb D\subset \mathbb C$ and of the right half complex plane $\mathbb C^{+}=\{z\in \mathbb C:{\rm{Re}}(z)>0\}$  play an important role in the theory of hyperbolic geometry, complex dynamical systems, and composition operators; see  e.g. \cite{ Abate0, Cowen, Shapiro, Shoikhet}.
These two theorems together with the boundary Schwarz lemma \cite{Herzig} are powerful tools in the theory of iterating holomorphic self-mappings, their fixed points and boundary behaviors; see e.g. \cite{Shoikhet, Cowen}.
There are two canonical approaches from different points of view to the study of  Julia-Carath\'{e}odory theorem.
The usual one is the function-theoretic approach which has a strongly geometric character and depends ultimately on the Schwarz lemma and involves an asymptotic version of the Schwarz lemma, known as the Julia  lemma.
 Sarason initiated the study of the Julia-Carath\'{e}odory theorem via a Hilbert space approach, which puts insight from a different perspective. In that treatment, the Julia lemma emerges as a consequence of the Cauchy-Schwarz inequality; see \cite{Sarason1,Sarason2} for more details.

There are many extensions for these results
 to higher dimensions for holomorphic mappings of several complex variables.
The Julia-Carath\'{e}odory theorem for holomorphic self-mappings on the open unit ball $\mathbb B_n\subset \mathbb C^n$ were studied by  Herv\'{e}
\cite{Herve}  and  by Rudin \cite{Rudin}, and for holomorphic self-mappings on strongly (pseudo)convex domains and other domains in $\mathbb C^n$ by other authors, notably by Abate; see
\cite{Abate0} and the references therein on this subject; see also  \cite{Agler} for the bidisk version and \cite{Abate} for the polydisk version. Recently  a variant of the Julia-Carath\'{e}odory theorem for infinitesimal generators has been investigated  in \cite{Bracc}.
 However,  there are no analogous results, as far as we know,  for other classes of functions, such as the classical regular functions in the sense of Cauchy-Fueter and the recently introduced slice regular functions. A great challenge  arising from  extensions to the  setting of quaternions is the lack of commutativity.

The purpose of the present article is to generalize  the Julia lemma and the Julia-Carath\'{e}odory theorem  as well as  the boundary Schwarz lemma to the setting of quaternions for slice regular functions of one quaternionic variable.

The theory of slice regular functions is initiated recently by Gentili and Struppa \cite{GS1,GS2}. It is significantly different from that of regular functions in the sense of Cauchy-Fueter and has  elegant  applications to the functional calculus for noncommutative operators \cite{Co2} and to Schur analysis \cite{Alpay2}. For the detailed up-to-date theory, we refer to the monographs \cite{GSS, Co2}. The theory of slice regular functions  centers around the non-elliptic  differential operator with nonconstant coefficients, given by
 $$\big|{\rm{Im}}(q)\big|^2\frac{\partial }{\partial x_0}+{\rm{Im}}(q)
 \sum_{j=1}^3 x_j \frac{\partial }{\partial x_j},
$$
where ${\rm{Im}}(q)$ is the imaginary part of the quaternion $q=x_0+{\rm{Im}}(q)\in \mathbb H$; see \cite{CGCS} for more details.
 Furthermore, the notion of slice regularity was also extended to functions of an octonionic variable  \cite{GS50} and to the setting of Clifford algebras  \cite{Co6,Co3} as well as  to the setting of  alternative real algebras \cite{Ghiloni1}.

The study of a geometric theory for slice regular functions of one quaternionic variable has by now produced several interesting results, sometimes analogous to those valid for holomorphic functions. The Bohr theorem \cite{DGS2} is among these results, together with the Bloch-Landau theorem \cite{DGS1} and the Landau-Toeplitz theorem \cite{DS}.
 Recently,  the authors established the growth  and distortion theorems for slice regular extensions of normalized univalent holomorphic functions with the tool of a so-called convex combination identity \cite{RW2}, and set up  the Borel-Carath\'{e}odory theorems for slice regular functions using the method of finite average \cite{RW1}. In the present article, we continue to investigate the geometric properties of slice regular functions and our starting point is the counterpart of Schwarz-Pick theorem in the setting of quaternions, which is first established in \cite{BS}; see \cite{Alpay1} for an alternative and shorter proof by means of the Nevanlinna-Pick interpolation problem.

Our main results in this article  are the  quaternionic versions of  the Julia lemma, the Julia-Carath\'{e}odory theorem,  the Burns-Krantz rigidity theorem as well as the boundary Schwarz lemma for  slice regular  self-mappings of the open unit ball $\mathbb B\subset \mathbb H$ and of the right half-space $\mathbb H^+$. Although some results of the present paper coincide in form with those in the complex setting, they can not be obtained directly from the original complex results using several properties of slice regular functions, except Theorem \ref{B-krantz} and Corollary \ref{218}.

Before presenting our main results, we first recall some notations. Let $\mathbb B$ denote the open unit ball in the quaternions $\mathbb H$. For any $k>0$ and any point $p\in \partial\mathbb B$, we denote
$$\mathcal{S}(p,k)=\big\{q\in \mathbb B:|p-q|^2<k(1-|q|^2)\big\}.$$
It is known that $\mathcal{S}(p,k)$ is an open ball internally tangent to the unit sphere $\partial\mathbb B$ with  center $\frac{p}{1+k}$ and radius $\frac{k}{1+k}$. The boundary  sphere of this ball is called an \textit{orisphere}. These orispheres are crucial in the geometry theory concerning boundary behaviors of slice regular self-mappings of the open unit ball $\mathbb B$ as shown in the following quaternionic counterpart of Julia's lemma.

\begin{thm}\label{Julia}{\bf(Julia)}
Let $f$ be a slice regular self-mapping of the open unit ball $\mathbb B$ and let $\xi\in\partial \mathbb B$. Suppose that there exists a sequence $\{q_n\}_{n\in \mathbb N}\subset \mathbb B$ converging to $\xi$ as $n$ tends to $\infty$, such that the limits
$$\alpha:=\lim\limits_{n\rightarrow\infty}\frac{1-|f(q_n)|}{1-|q_n|}$$
and
$$\eta:=\lim\limits_{n\rightarrow\infty}f(q_n)$$
exist $($finitely$)$. Then $\alpha>0$ and the  inequality
\begin{eqnarray}\label{eq:11}
{\rm{Re}}\Big(\big(1-f(q)\overline{\eta}\big)^{-\ast}\ast\big(1+f(q)\overline{\eta}\big)\Big)
\geq \frac{1}{\alpha}\
{\rm{Re}}\Big(\big(1-q\overline{\xi}\big)^{-\ast}\ast\big(1+q\overline{\xi}\big)\Big)
\end{eqnarray}
holds throughout the open unit ball $\mathbb B$ and is strict except for regular M\"obius transformations of $\mathbb B$.

In particular, the inequality $(\ref{eq:11})$ is equivalent to
\begin{equation}\label{eq:10}
\frac{|\eta-f(q)|^2}{1-|f(q)|^2}\leq \alpha\,\frac{|1-q|^2}{1-|q|^2},
\end{equation}
whenever $\xi=1$.
In other words,
$$f(\mathcal{S}(1,k))\subseteq\mathcal{S}(\eta,\alpha k), \qquad \forall \ k>0.$$
\end{thm}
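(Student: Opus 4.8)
The plan is to transplant the classical function-theoretic proof of Julia's lemma, using the Schwarz--Pick lemma for slice regular functions of \cite{BS} as the engine; the genuinely quaternionic content lies in the non-commutative kernel computations and in controlling the regular product under a limit. The first task is to identify the two ``regular Poisson--Cayley'' kernels in \eqref{eq:11}. Because $1\pm q\overline\xi$ commute under $\ast$ and the symmetrization $(1-q\overline\xi)^{s}=q^{2}-2\,{\rm Re}(\xi)\,q+1$ has real coefficients—hence is central and commutes with $q$—the evaluation formula for regular products yields the closed form
\[
{\rm Re}\Big(\big(1-q\overline\xi\big)^{-\ast}\ast\big(1+q\overline\xi\big)\Big)=\frac{\big(1-|q|^{2}\big)\,|1-q\xi|^{2}}{\big|q^{2}-2\,{\rm Re}(\xi)\,q+1\big|^{2}}.
\]
In particular, when $\xi=1$ the right-hand side collapses to the ordinary Poisson kernel $\frac{1-|q|^{2}}{|1-q|^{2}}$, which is what will make the specialization to \eqref{eq:10} possible.

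Two preliminary points come next. Finiteness of $\alpha$ gives $1-|f(q_{n})|\sim\alpha\,(1-|q_{n}|)\to0$, so $|f(q_{n})|\to1$ and $|\eta|=1$; and comparing $q_{n}$ with the base point $0$ in the Schwarz--Pick lemma yields $\frac{1-|f(q_{n})|^{2}}{1-|q_{n}|^{2}}\ge\frac{1-|f(0)|\,|f(q_{n})|}{1+|f(0)|\,|f(q_{n})|}$, whence $\alpha\ge\frac{1-|f(0)|}{1+|f(0)|}>0$ on letting $n\to\infty$. For \eqref{eq:11} I then use the two-point form of the Schwarz--Pick lemma, written through the same kernels: for all $q,w\in\mathbb B$,
\[
{\rm Re}\Big(\big(1-f(q)\overline{f(w)}\big)^{-\ast}\ast\big(1+f(q)\overline{f(w)}\big)\Big)\ge\frac{1-|w|^{2}}{1-|f(w)|^{2}}\,{\rm Re}\Big(\big(1-q\overline w\big)^{-\ast}\ast\big(1+q\overline w\big)\Big).
\]
Putting $w=q_{n}$ and letting $n\to\infty$—using $f(q_{n})\to\eta$, $q_{n}\to\xi$, $\frac{1-|q_{n}|^{2}}{1-|f(q_{n})|^{2}}\to\frac1\alpha$, and the continuity of the regular reciprocal and of $\ast$ in the constant parameters $\overline{f(q_{n})}\to\overline\eta$, $\overline{q_{n}}\to\overline\xi$ (the relevant symmetrizations stay zero-free on $\mathbb B$ because $|f|<1$)—passes to the limit and gives \eqref{eq:11}. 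Strictness then follows from the rigidity clause of the Schwarz--Pick lemma, equality being propagated only by the regular M\"obius transformations of $\mathbb B$.

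For the concluding ``in particular'' I specialize to $\xi=1$. The right kernel is then $\frac{1-|q|^{2}}{|1-q|^{2}}$, so \eqref{eq:11} becomes \eqref{eq:10} as soon as the left kernel ${\rm Re}\big((1-f(q)\overline\eta)^{-\ast}\ast(1+f(q)\overline\eta)\big)$ is shown to equal $\frac{1-|f(q)|^{2}}{|1-f(q)\overline\eta|^{2}}$, after which $|1-f(q)\overline\eta|=|\eta-f(q)|$ yields the stated form; the orisphere inclusion is then immediate, since $q\in\mathcal S(1,k)$ means $\frac{|1-q|^{2}}{1-|q|^{2}}<k$, whence \eqref{eq:10} forces $|\eta-f(q)|^{2}<\alpha k\,(1-|f(q)|^{2})$, i.e. $f(q)\in\mathcal S(\eta,\alpha k)$. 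This identification of the left kernel is the step I expect to be the main obstacle: the variable enters the right kernel linearly, so the closed form above settles it, whereas in the left kernel the regular product genuinely entangles with the power series of $f$, and—unlike the complex case—${\rm Re}\big((1-f(q)\overline\eta)^{-\ast}\ast(1+f(q)\overline\eta)\big)$ is \emph{not} the naive Poisson kernel of $f(q)$ for a general slice regular $f$. Unwinding $(\,\cdot\,)^{-\ast}\ast(\,\cdot\,)$ through the evaluation formula, rather than by pointwise substitution, so as to recover $\frac{1-|f(q)|^{2}}{|\eta-f(q)|^{2}}$, is where the non-commutativity must be confronted.
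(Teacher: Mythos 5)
Your derivation of the main inequality \eqref{eq:11} follows the same route as the paper: the Schwarz--Pick lemma of \cite{BS}, converted to a pointwise statement via the quotient relation $f^{-\ast}\ast g(q)=f(T_f(q))^{-1}g(T_f(q))$, the algebraic identity $1-\big|\tfrac{a-b}{1-a\bar b}\big|^{2}=\tfrac{(1-|a|^{2})(1-|b|^{2})}{|1-a\bar b|^{2}}$, and a passage to the limit along $q_{n}$; your ``two-point kernel form'' is exactly the paper's intermediate inequality, and your closed form for the right-hand kernel is correct. However, two steps are genuinely defective. First, the equality case: you claim strictness ``follows from the rigidity clause of the Schwarz--Pick lemma.'' It does not, because \eqref{eq:11} is obtained as a \emph{limit} of Schwarz--Pick inequalities, and equality in the limiting inequality at some $q_{0}$ does not force equality in any of the finite-$n$ inequalities, so there is no single Schwarz--Pick instance whose rigidity you can invoke. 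The paper instead observes that the difference of the two sides of \eqref{eq:11} is the real part of a regular function that is nonnegative on $\mathbb B$ and vanishes at $q_{0}$, applies the maximum principle for real parts of regular functions to conclude that this function is an imaginary constant $It_{0}$, and then solves explicitly for $f$, exhibiting it as $(1+q\bar u)^{-\ast}\ast(q+u)v$; both this computation and the converse verification that regular M\"obius transformations actually attain equality are missing from your argument.

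Second, the specialization to $\xi=1$. You correctly sense the obstacle but leave it unresolved, and the identity you propose to prove is in fact false: by the quotient relation,
\[
{\rm{Re}}\Big(\big(1-f(q)\bar\eta\big)^{-\ast}\ast\big(1+f(q)\bar\eta\big)\Big)
=\frac{1-\big|f(\tilde q)\big|^{2}}{\big|1-f(\tilde q)\bar\eta\big|^{2}},
\qquad \tilde q=T_{1-f\bar\eta}(q),
\]
and since a slice regular $f$ does not map the $2$-sphere of $q$ to a single point, $f(\tilde q)\neq f(q)$ in general, so the left kernel is \emph{not} $\tfrac{1-|f(q)|^{2}}{|\eta-f(q)|^{2}}$ pointwise. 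The equivalence with \eqref{eq:10} is an equivalence of inequalities holding throughout $\mathbb B$, not a pointwise identity: one substitutes $q\mapsto T_{1-\eta\ast f^{c}}(q)$ (the inverse homeomorphism), which turns the left side into $\tfrac{|\eta-f(q)|^{2}}{1-|f(q)|^{2}}$, and notes that the right side is unchanged because the conjugation preserves real parts and moduli, so $|1-T(q)|=|1-q|$ --- this is precisely where the reality of $\xi=1$ is essential. Without this reparametrization step the ``in particular'' clause, and hence the orisphere inclusion, is not established.
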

Inequality $(\ref{Julia})$ is called  Julia's inequality in view of (\ref{eq:10}). It results in the quaternionic version of the Julia-Carath\'{e}odory theorem.

\begin{thm}\label{Julia-Caratheodory}{\bf(Julia-Carath\'{e}odory)}
Let $f$ be a slice regular self-mapping of the open unit ball $\mathbb B$. Then the following conditions are equivalent:
\begin{enumerate}
\item[(i)]  the lower limit
\begin{eqnarray}\label{def:alpha-Julia}\alpha:=\liminf\limits_{q\rightarrow 1}\dfrac{1-|f(q)|}{1-|q|}
\end{eqnarray} is finite,
where the limit is taken as $q$ approaches $1$ unrestrictedly in $\mathbb B$;

\item[(ii)]  there is a number  $\eta\in\partial \mathbb B$ such that the  slice regular quotient $$(q-1)^{-\ast}\ast\big(f(q)-\eta\big)$$ has a non-tangential limit, say $f'(1)$, at the point $1$;
\item[(iii)]  $f$ and its slice derivative $f'$ have  non-tangential limits, say $f(1)$ and $f'(1)$ respectively, at the point $1$.
\end{enumerate}
\bigskip

Moreover, under the above conditions we have
\begin{enumerate}
\item[(a)]  $\alpha>0$ in ${\rm{(i)}}$;
\item[(b)]  the slice derivatives $f'(1)$ in ${\rm{(ii)}}$ and ${\rm{(iii)}}$ are the same;
\item[(c)]  $f'(1)=\alpha \eta$ and $\eta=f(1)$;
\item[(d)]  the quotient $\dfrac{1-|f(q)|}{1-|q|}$ has the non-tangential limit $\alpha$.
\end{enumerate}
\end{thm}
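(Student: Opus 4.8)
The plan is to prove the cycle of implications $(\mathrm{i})\Rightarrow(\mathrm{ii})\Rightarrow(\mathrm{iii})\Rightarrow(\mathrm{i})$ and to harvest the quantitative assertions $(\mathrm a)$--$(\mathrm d)$ along the way, using Julia's lemma (Theorem \ref{Julia}) as the driving engine and reducing the analysis to a single slice $\mathbb{C}_I$ by means of the splitting lemma. The crucial elementary observation is that the boundary fixed point $1$ is real, hence lies in every slice $\mathbb{C}_I=\mathbb R+\mathbb R I$; this lets us study the non-tangential behaviour at $1$ slice by slice and then globalise through the representation formula, which recovers $f$ on all of $\mathbb B$ from its restriction to one slice.

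First I would extract the boundary datum from $(\mathrm i)$. Choosing a sequence $q_n\to 1$ realising the lower limit $\alpha$ and passing to a subsequence with $f(q_n)\to\eta$, the finiteness of $\alpha$ forces $|\eta|=1$: otherwise the numerator $1-|f(q_n)|$ would stay bounded away from $0$ while the denominator tends to $0$. Theorem \ref{Julia} then yields at once $\alpha>0$, which is $(\mathrm a)$, together with Julia's inequality $\frac{|\eta-f(q)|^2}{1-|f(q)|^2}\le\alpha\frac{|1-q|^2}{1-|q|^2}$ and the orisphere containment $f(\mathcal S(1,k))\subseteq\mathcal S(\eta,\alpha k)$. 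Since on any non-tangential approach region $\{|1-q|\le C(1-|q|)\}$ one has $\frac{|1-q|^2}{1-|q|^2}\to 0$, the inequality forces $|\eta-f(q)|\to 0$ there, so $f$ has non-tangential limit $\eta$ at $1$; this already gives $\eta=f(1)$ and the first half of $(\mathrm c)$.

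Next I would establish $(\mathrm{ii})$ by splitting and then deduce $(\mathrm{iii})$, $(\mathrm b)$, $(\mathrm d)$. Fixing $I$ and writing $f_I=F+GJ$ with $F,G\colon\mathbb D_I\to\mathbb C_I$ holomorphic and $|F|^2+|G|^2=|f|^2<1$, the pair $\phi=(F,G)$ is a holomorphic map of the disc into the unit ball of $\mathbb C_I^2$, and the finiteness of $\alpha$ passes to the corresponding lower limit for $\phi$. The classical Julia--Carath\'eodory theorem for disc-to-ball maps then provides the non-tangential limit of the vector Newton quotient $(z-1)^{-1}\big(\phi(z)-\zeta\big)$, with $\zeta=(\eta_1,\eta_2)$; since $z-1\in\mathbb C_I$ commutes, this is exactly the slice restriction of $(q-1)^{-\ast}\ast\big(f(q)-\eta\big)=(q-1)^{-1}\big(f(q)-\eta\big)$, the simplification being legitimate because $q\mapsto q-1$ is slice preserving. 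Propagating this slice limit by the representation formula gives the non-tangential limit $f'(1)$ of the $\ast$-quotient on all of $\mathbb B$, which is $(\mathrm{ii})$; identifying the radial part with $\alpha$ and using $\overline\eta\eta=1$ yields $f'(1)=\alpha\eta$, completing $(\mathrm c)$. The equivalence with $(\mathrm{iii})$ and the coincidence $(\mathrm b)$ rest on the one-variable fact that on each slice the Newton quotient and the complex derivative $\partial_s f=f'$ share the same non-tangential limit, transported once more through the splitting and the representation formula, while $(\mathrm{ii})\Rightarrow(\mathrm i)$ closes the cycle by a direct linear estimate of $1-|f(q)|$. Finally $(\mathrm d)$ follows from $(\mathrm c)$: inserting $f(q)-\eta\sim\alpha\eta(q-1)$ into $1-|f(q)|^2=2\,{\rm{Re}}\big(\overline\eta(\eta-f(q))\big)+o(|1-q|)$ gives $1-|f(q)|^2\sim\alpha(1-|q|^2)$ non-tangentially, whence $\frac{1-|f(q)|}{1-|q|}\to\alpha$.

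I expect the main obstacle to be the non-commutativity in the reconstruction step. Because $f$ does not preserve the slice $\mathbb C_I$, one cannot simply invoke the one-variable Schwarz--Pick self-map theory; the detour through a disc-to-ball map and the representation formula is forced, and one must verify that the vector-valued non-tangential limit reassembles, through the left multiplication by $(q-1)^{-1}$ and the splitting $f_I=F+GJ$, into a genuine non-tangential limit of the $\ast$-quotient over the whole ball rather than merely along the chosen slice. Keeping track of the order of multiplication is essential precisely here and in checking that the two determinations of $f'(1)$ in $(\mathrm{ii})$ and $(\mathrm{iii})$ coincide.
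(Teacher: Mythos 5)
Your opening step (extracting $\eta\in\partial\mathbb B$, invoking Theorem \ref{Julia} for $\alpha>0$ and for the non-tangential limit $f(q)\to\eta$) and your treatment of (ii)$\Leftrightarrow$(iii), (ii)$\Rightarrow$(i) and (d) all match the paper. The gap is in the central implication (i)$\Rightarrow$(ii), where you appeal to ``the classical Julia--Carath\'eodory theorem for disc-to-ball maps'' to obtain a non-tangential limit of the \emph{vector} Newton quotient $(z-1)^{-1}(\phi(z)-\zeta)$ for $\phi=(F,G):\mathbb D_I\to\mathbb B_2$. That theorem does not say this. For a holomorphic map of the disc into $\mathbb B_N$ with finite Julia number, the classical statement (Rudin's Theorem 8.5.6 with one-dimensional source) gives an angular limit only for the scalar quotient $(z-1)^{-1}\bigl(1-\langle\phi(z),\zeta\rangle\bigr)$, while the component of $\phi-\zeta$ orthogonal to $\zeta$ is controlled only to order $|1-z|^{1/2}$; the full vector quotient need not even be bounded. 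The reason the quaternionic statement is nevertheless true is exactly the extra strength of Theorem \ref{Julia}: the quaternionic horospheres $\mathcal S(\eta,k)$ are round Euclidean balls, so Julia's inequality bounds the full distance $|\eta-f(q)|^2$ by $\alpha\,\frac{|1-q|^2}{1-|q|^2}\,(1-|f(q)|^2)$, whence $|\eta-f(q)|\le 2\alpha k\,|1-q|$ non-tangentially. This is a genuinely stronger input than anything the split map $(F,G)$ inherits as a mere disc-to-ball map, and your reduction discards it at the decisive moment.

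Even after securing boundedness of $(q-1)^{-1}(f(q)-\eta)$ from the quaternionic Julia inequality, the \emph{existence} of its limit still requires an argument that your proposal omits. The paper supplies it by first proving the radial statements $\lim_{r\to1^-}\frac{1-|f(r)|}{1-r}=\alpha$ and $\lim_{r\to1^-}\frac{|\eta-f(r)|}{1-r}=\alpha$, deducing $\frac{1-|f(r)|}{|\eta-f(r)|}\to1$ and hence that the unimodular factor $\frac{1-f(r)\bar\eta}{|1-f(r)\bar\eta|}$ tends to $1$, which pins down the radial limit $\frac{\eta-f(r)}{1-r}\to\alpha\eta$; the quaternionic Lindel\"of principle (Lemma \ref{Lindelof}: boundedness in every non-tangential region plus a limit along one curve in one slice implies a non-tangential limit) then upgrades this to the non-tangential limit on all of $\mathbb B$. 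This Lindel\"of step also does, more cleanly, the job you assign to the representation formula. To repair your proof, replace the citation of the disc-to-ball theorem by: (1) the linear bound $|\eta-f(q)|=O(|1-q|)$ from Theorem \ref{Julia}, (2) the radial-limit computation just described, and (3) Lemma \ref{Lindelof}.
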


It is worth remarking here that  $f'(1)$ is closely related to $\alpha$ when $\alpha$ is finite. However, when $\alpha=\infty$,  $f'(1)$ can be any quaternion $\beta$ as demonstrated  by the regular polynomial $f(q)={q^n}\beta/n$ with $n>|\beta|$. Incidentally, although Theorem \ref{Julia} and Theorem \ref{Julia-Caratheodory} coincide in form with those in the complex setting, they can not be obtained directly from the original complex results using several properties of slice regular functions.

The quaternionic right half-space version of the Julia-Carath\'{e}odory theorem can also be established and its proof depends ultimately on the right half-space version of the Schwarz-Pick theorem (see Sect. $4$). As a direct consequence, we obtain the Burns-Krantz  rigidity theorem  for slice regular functions with values in the closed right half-space
$$\overline{\mathbb H}^+=\big\{q\in\mathbb H: \textrm{Re}(q)\geq0\big\};$$
see also \cite{Migliorini, GV}.

\begin{thm} \label{B-krantz}{\bf(Burns-Krantz)}
If  $f:\mathbb B\rightarrow \overline{\mathbb H}^{+}$ is  slice regular and $$f(q)=o(|q+1|),  \qquad q\rightarrow -1,$$ then $f\equiv0$.
\end{thm}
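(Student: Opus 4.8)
The plan is to reduce Theorem~\ref{B-krantz} to the classical complex Burns--Krantz rigidity theorem one slice at a time, and then to propagate the resulting vanishing out of the real axis by the Identity Principle for slice regular functions.

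First I would fix an imaginary unit $I$ (so $I^2=-1$) together with an orthogonal imaginary unit $J$, and invoke the Splitting Lemma to write the restriction $f_I:=f|_{\mathbb B\cap\mathbb C_I}=F+GJ$, where $F,G\colon \mathbb B\cap\mathbb C_I\to\mathbb C_I$ are holomorphic in the complex structure of $\mathbb C_I$. Since $\{1,I,J,IJ\}$ is an orthonormal basis of $\mathbb H$, this splitting is orthogonal, so that ${\rm Re}(f_I)={\rm Re}(F)$ and $|F|\le|f_I|$. The hypothesis $f(\mathbb B)\subseteq\overline{\mathbb H}^{+}$ then forces ${\rm Re}(F)\ge0$, i.e.\ $F$ maps the slice disc into the closed right half-plane of $\mathbb C_I$; moreover, since $-1$ is real and hence lies in every slice, the growth hypothesis gives $|F(z)|\le|f(z)|=o(|z+1|)$ as $z\to-1$. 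Under the identification $\mathbb C_I\cong\mathbb C$ these are exactly the hypotheses of the complex counterpart of the present theorem (the classical Burns--Krantz rigidity theorem in its right half-plane form, cf.\ the references quoted after the statement), and I would quote it to conclude $F\equiv0$.

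Next, because the choice of $I$ was arbitrary, $F\equiv0$ holds for the splitting attached to \emph{every} imaginary unit $I$. Evaluating at a real point $x\in(-1,1)$, which belongs to all slices simultaneously, the value $F(x)$ is precisely the orthogonal projection of the single quaternion $f(x)$ onto ${\rm span}_{\mathbb R}\{1,I\}$; its vanishing for every $I$ forces both ${\rm Re}(f(x))=0$ and $\langle f(x),I\rangle=0$ for all imaginary units $I$, whence $f(x)=0$. Thus $f$ vanishes on the whole real segment $(-1,1)$, a set with accumulation points inside the slice disc $\mathbb B\cap\mathbb C_I$, and the Identity Principle for slice regular functions yields $f\equiv0$.

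The genuinely quaternionic obstacle, and the step I expect to be the crux, is that the half-space condition controls only the real part of $f$ and therefore, on a single slice, pins down only the $\mathbb C_I$-component $F$ while leaving the $J$-component $G$ entirely free; one application of the complex theorem can never annihilate $f$ (indeed any nonzero imaginary constant has vanishing real part on every slice, yet fails only the growth condition). The device that overcomes this is to let $I$ sweep out all imaginary units and to exploit that distinct slices all meet along $\mathbb R$: the vanishing of every $\mathbb C_I$-component collapses, at real points, to $f\equiv0$ on $(-1,1)$, after which slice regularity through the Identity Principle completes the argument.
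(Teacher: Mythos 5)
Your proof is correct, but it follows a genuinely different route from the paper's. You reduce the statement slice by slice: the Splitting Lemma (Lemma \ref{eq:Splitting}) isolates the $\mathbb C_I$-component $F$, which inherits both the half-plane range condition and the $o(|z+1|)$ decay at the real boundary point $-1$, so the classical one-variable rigidity theorem kills $F$ on each slice; you then correctly identify the quaternionic crux --- that a single slice only controls the $\mathbb C_I$-projection of $f$ --- and resolve it by letting $I$ range over $\mathbb S$, observing that at real points the vanishing of every such projection forces $f=0$ on $(-1,1)$, after which the Identity Principle finishes. The paper instead applies the real-coefficient Cayley transform $\varphi(q)=(1+q)^{-1}(1-q)$ (which preserves slices, so $f\circ\varphi$ is again regular) to move the problem to $\mathbb H^+$, and then invokes its Corollary \ref{218}, itself a byproduct of the quaternionic half-space Julia--Carath\'eodory theorem (Theorem \ref{JC-half space}) built in Section \ref{Julia-Caratheodory theorem}. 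Your argument is self-contained modulo the classical complex result and standard slice-regular tools, and it makes explicit why this theorem --- unlike most others in the paper --- does reduce to the complex case; the authors acknowledge exactly this in the introduction, where Theorem \ref{B-krantz} and Corollary \ref{218} are singled out as the only results obtainable directly from the complex originals. What the paper's route buys is economy given the machinery already developed: once Theorem \ref{JC-half space} and its corollaries are in place, the proof is one line, and that machinery carries independent interest.
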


The classical boundary Schwarz lemma  claims that
\begin{equation}\label{C-Schwarz}
\overline{\xi} f(\xi)\overline{f'(\xi)}\geq1.
\end{equation}
for any  holomorphic function $f$ on $\mathbb D\cup\{\xi\}$  satisfying
$f(\mathbb D)\subseteq \mathbb D$, $f(0)=0$ and $f(\xi)\in\partial\mathbb D$ for some point $\xi\in\partial\mathbb D$.
However, its quaternionic  variant has an additional  item in terms of Lie brackets reflecting  the non-commutativity of quaternions.

For a given element $\xi\in\mathbb H$, we denote by $[\xi]$ the associated 2-sphere:
$$[\xi]=\big\{q\xi q^{-1}: q\in\mathbb H\setminus\{0\}\big\}.$$
Recall that two quaternions belong to the same sphere if and only if they have the same modulus and the same real part.

\begin{thm}\label{Generalized Herzig}{\bf(Schwarz)}
Let $\xi\in \partial\mathbb B$ and $f$ be a slice regular function on $\mathbb B\cup[\xi]$ such that $f(\mathbb B)\subseteq\mathbb B$ and $f(\xi)\in \partial\mathbb B$. Then:
\begin{enumerate}
\item[(i)]
It holds that
$$\overline{\xi}\Big(f(\xi)\overline{f'(\xi)}+\big[\bar{\xi}, f(\xi)\overline{R_{\bar{\xi}}R_{\xi}f(\xi)}\,\big]\Big)
\geq \frac{2\big(1-|f(0)|\big)^2}{1-|f(0)|^2+|f'(0)|},$$
where $$R_{\xi}f(q):=(q-\xi)^{-\ast}\ast\big(f(q)-f(\xi)\big).$$

\item[(ii)]
If further  $$f^{(k)}(0)=0, \qquad \forall\ k=0,1,\ldots,n-1$$
for some  $n\in\mathbb N$, then
 $$\overline{\xi}\Big(f(\xi)\overline{f'(\xi)}+\big[\bar{\xi}, f(\xi)\overline{R_{\bar{\xi}}R_{\xi}f(\xi)}\,\big]\Big)
 \geq n+\frac{2\big(1-\big|f^{(n)}(0)\big|/n!\big)^2}{1-\big|f^{(n)}(0)/n!\big|^2
 +\big|f^{(n+1)}(0)\big|/(n+1)!}.$$
 In particular, $$\overline{\xi}\Big(f(\xi)\overline{f'(\xi)}+\big[\bar{\xi}, f(\xi)\overline{R_{\bar{\xi}}R_{\xi}f(\xi)}\,\big]\Big)
 > n$$ unless $f(q)=q^nu$ for some $u\in\partial\mathbb B$.
\end{enumerate}
\end{thm}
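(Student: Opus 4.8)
The plan is to recognize the quaternion on the left-hand side as the boundary dilation coefficient furnished by Julia's lemma and the Julia--Carath\'{e}odory theorem at $\xi$, and then to bound that coefficient from below by a M\"obius normalization at the origin, following the classical scheme while tracking the non-commutativity of $\mathbb{H}$. Since $f$ is slice regular on the open set $\mathbb{B}\cup[\xi]$, the values $f(\xi)$ and $f'(\xi)$ are genuine (not merely non-tangential) limits, and the nonnegative real number
\[
\alpha:=\lim_{|q|\to 1}\frac{1-|f(q)|}{1-|q|}
\]
is finite by Theorem \ref{Julia-Caratheodory}; Julia's lemma (Theorem \ref{Julia}) moreover gives $\alpha>0$.

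The crux, and what I expect to be the main obstacle, is the algebraic identity
\[
\alpha=\overline{\xi}\Big(f(\xi)\overline{f'(\xi)}+\big[\bar{\xi},\,f(\xi)\overline{R_{\bar{\xi}}R_{\xi}f(\xi)}\,\big]\Big),
\]
which in particular asserts that the quaternion on the right is real. In the complex case the Julia--Carath\'{e}odory relation reduces to $f'(\xi)=\alpha\,f(\xi)\overline{\xi}$, the commutator drops out, and one recovers $\overline{\xi}f(\xi)\overline{f'(\xi)}=\alpha$. Over $\mathbb{H}$ this naive relation fails: computing $\overline{\xi}\,f(\xi)\overline{f'(\xi)}$ from the boundary limit leaves a purely imaginary discrepancy, and the point is that this discrepancy is exactly a second-order regular difference quotient, recorded by $R_{\bar{\xi}}R_{\xi}f(\xi)$, whose contribution is the displayed Lie bracket. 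Reconciling the slice derivative with the metric quantity $\alpha$ in the absence of commutativity is where the real work lies.

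Granting the identification of the left-hand side with $\alpha$, I would obtain the lower bound as follows. Choose a regular M\"obius transformation $M$ of $\mathbb{B}$ with $M(f(0))=0$ and let $g$ be the slice regular self-map of $\mathbb{B}$ obtained by post-composing $f$ with $M$; then $g(0)=0$ and $g(\xi)\in\partial\mathbb{B}$, and the quaternionic Schwarz--Pick theorem \cite{BS} yields $\lambda:=|g'(0)|=|f'(0)|/(1-|f(0)|^2)\le 1$. Since $g(0)=0$ one factors $g(q)=q\,h(q)$ with $h$ again a self-map of $\mathbb{B}$ (by the Schwarz lemma) satisfying $|h(0)|=\lambda$ and $h(\xi)\in\partial\mathbb{B}$; applying Julia's lemma to $h$ at the origin gives $\alpha_h\ge(1-\lambda)/(1+\lambda)$, while the factor $q$ contributes $1$ to the dilation, so $\alpha_g=1+\alpha_h\ge 2/(1+\lambda)$. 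The transformation rule for the boundary dilation under the automorphism $M$, namely $\alpha_g=\alpha_M\,\alpha_f$ with $\alpha_M=(1-|f(0)|^2)/|1-\overline{f(0)}f(\xi)|^2$, then gives
\[
\alpha_f=\frac{|1-\overline{f(0)}f(\xi)|^2}{1-|f(0)|^2}\,\alpha_g\ge\frac{2\,|1-\overline{f(0)}f(\xi)|^2}{1-|f(0)|^2+|f'(0)|},
\]
and the elementary estimate $|1-\overline{f(0)}f(\xi)|\ge 1-|f(0)|$ finishes (i). The equality analysis shows the bound is attained precisely for regular M\"obius transformations.

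For part (ii) the hypothesis $f^{(k)}(0)=0$ for $k<n$ lets me factor $f(q)=q^n g(q)$, where the Schwarz lemma again makes $g$ a self-map of $\mathbb{B}$ with $g(0)=f^{(n)}(0)/n!$, $g'(0)=f^{(n+1)}(0)/(n+1)!$, and $g(\xi)\in\partial\mathbb{B}$. The factor $q^n$ contributes $n$ to the boundary dilation, so $\alpha_f=n+\alpha_g$; inserting the bound from (i) for $g$ produces the stated inequality, and the strict inequality $>n$ persists unless $\alpha_g=0$, which forces $g$ to be a unit constant and hence $f(q)=q^n u$. Beyond the identity above, the steps that will need care are the behavior of the boundary dilation under regular composition and under factoring out powers of $q$, since regular products interact non-trivially with moduli over $\mathbb{H}$.
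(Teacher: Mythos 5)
Your outline leaves the central step of the theorem unproved: the identification of the quaternion
$\overline{\xi}\bigl(f(\xi)\overline{f'(\xi)}+\bigl[\bar{\xi}, f(\xi)\overline{R_{\bar{\xi}}R_{\xi}f(\xi)}\,\bigr]\bigr)$
with a positive real boundary dilation coefficient. You correctly flag this as ``the crux'' and then write ``granting the identification\ldots'', but that identification \emph{is} the theorem's main content --- it is precisely where the Lie bracket comes from. The paper establishes it not through Julia--Carath\'{e}odory at $\xi$ but through the classical Hopf lemma for the subharmonic function $|f|^2$ at $\xi$, combined with Stoppato's second-order expansion (Theorem \ref{direction derivative}), which gives
$\frac{\partial f}{\partial v}(\xi)=v\,\partial_s f(\xi)+(\xi v-v\overline{\xi})R_{\bar{\xi}}R_{\xi}f(\xi)$
and hence
$\frac{\partial |f|^2}{\partial v}(\xi)=2\langle v, f(\xi)\overline{f'(\xi)}+[\bar{\xi}, f(\xi)\overline{R_{\bar{\xi}}R_{\xi}f(\xi)}]\rangle$.
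The vanishing of the tangential derivatives then forces that quaternion to be a positive multiple of $\xi$, equal to $\frac{\partial|f|}{\partial\xi}(\xi)=\lim_{r\to1^-}\frac{1-|f(r\xi)|}{1-r}$. Note also that you cannot invoke Theorem \ref{Julia-Caratheodory} here: the paper's Julia--Carath\'{e}odory theorem is stated and proved only at the real boundary point $1$, and the paper explicitly remarks that the expected relation $f'(\xi)=\alpha\overline{\xi}f(\xi)$ \emph{fails} at general $\xi$ --- which is exactly why the bracket term cannot be argued away.

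The second half of your argument also needs repair. You post-compose $f$ with a M\"obius transformation $M$ and use the chain rule $\alpha_{M\circ f}=\alpha_M\,\alpha_f$; but regular composition of slice regular functions is not available (the paper states this is precisely why the lemma cannot be reduced to $\xi=1$), and your intermediate bound with $|1-\overline{f(0)}f(\xi)|^2$ in the numerator is not justified in $\mathbb{H}$. The paper instead forms the regular $*$-quotient $g(q)=(1-f(q)\overline{f(0)})^{-\ast}\ast(f(0)-f(q))$, controls $|f|$ pointwise through the Lindel\"{o}f inequality (Proposition \ref{L-inequality}) and the conjugation maps $T_{1-g\overline{f(0)}}$ of Proposition \ref{prop:Quotient Relation} (which preserve $|q|$), and then takes the radial limit of $\frac{1-|f(r\xi)|}{1-r}$ directly --- no composition rule for boundary dilations is ever used. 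Your treatment of part (ii) by factoring $q^n$ is in the right spirit and matches the paper's reduction via $h(q)=q^{-n}f(q)$, but as written the proposal has a genuine gap at the key identity and rests on a composition step that does not exist in this setting.
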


\begin{cor}\label{Cor-Schwarz}
Let $\xi\in \partial\mathbb B$ and $f$ be a slice regular function on $\mathbb B\cup[\xi]$ such that $f(\mathbb B)\subseteq\mathbb B$, $f(0)=0$ and $f(\xi)=\xi$. Then
$$f'(\xi)-\big[\xi,R_{\bar{\xi}}R_{\xi}f(\xi)\big]>1$$
unless $f(q)=q$ for all $q\in\mathbb B$.
\end{cor}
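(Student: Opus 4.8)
The plan is to obtain this corollary as a direct specialization of Theorem \ref{Generalized Herzig}(i), feeding in the two extra hypotheses $f(0)=0$ and $f(\xi)=\xi$. The only structural fact about $\xi$ I will need is that $\xi\in\partial\mathbb B$ gives $\bar\xi\xi=\xi\bar\xi=1$, so that $\bar\xi$ is the two-sided inverse of $\xi$. Writing $R:=R_{\bar\xi}R_\xi f(\xi)$ for brevity, the argument then splits cleanly into an algebraic simplification of the left-hand side of the theorem's inequality and an analytic estimate of its right-hand side.

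For the left-hand side, I would substitute $f(\xi)=\xi$ and expand the Lie bracket, using $\bar\xi\xi=\xi\bar\xi=1$ repeatedly, to get
$$\bar\xi\Big(\xi\,\overline{f'(\xi)}+\big[\bar\xi,\xi\overline R\big]\Big)=\overline{f'(\xi)}+\bar\xi\,\overline R-\overline R\,\bar\xi=\overline{f'(\xi)-[\xi,R]},$$
where the last equality uses $\bar\xi\,\overline R-\overline R\,\bar\xi=-\overline{\xi R-R\xi}=-\overline{[\xi,R]}$. Thus the left-hand side of Theorem \ref{Generalized Herzig}(i) is exactly $\overline{f'(\xi)-[\xi,R]}$. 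Since the theorem asserts this quantity to be a real number bounded below, the conjugate may be dropped: $f'(\xi)-[\xi,R]$ is real and equals that same number. The care needed here, and the step I expect to be the only genuinely delicate one, is the bookkeeping of the noncommutative products inside the bracket; getting the order of $\xi$, $\bar\xi$, $\overline R$ and the resulting signs right is precisely what makes the extra Lie-bracket term reappear in the stated form.

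For the right-hand side, setting $f(0)=0$ reduces the bound $\frac{2(1-|f(0)|)^2}{1-|f(0)|^2+|f'(0)|}$ to $\frac{2}{1+|f'(0)|}$. I would then invoke the interior slice regular Schwarz lemma: from $f(\mathbb B)\subseteq\mathbb B$ and $f(0)=0$ one gets $|f'(0)|\leq1$, hence $\frac{2}{1+|f'(0)|}\geq1$. Combining this with the previous paragraph yields $f'(\xi)-[\xi,R]\geq1$.

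It remains to treat equality. If $f'(\xi)-[\xi,R]=1$, the whole chain of inequalities must be saturated, forcing $\frac{2}{1+|f'(0)|}=1$ and therefore $|f'(0)|=1$. The rigidity part of the Schwarz lemma then gives $f(q)=qu$ for some $u\in\partial\mathbb B$, and the boundary condition $f(\xi)=\xi$ reads $\xi u=\xi$; left-cancellation in the division ring $\mathbb H$ gives $u=1$, i.e.\ $f(q)=q$. Hence $f'(\xi)-[\xi,R]>1$ holds for every admissible $f$ except the identity, which is exactly the assertion of the corollary.
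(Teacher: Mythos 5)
Your proposal is correct and is essentially the derivation the paper intends (the corollary is stated without proof as a specialization of Theorem \ref{Generalized Herzig}): the conjugation bookkeeping giving $\overline{\xi}\big(\xi\overline{f'(\xi)}+[\bar\xi,\xi\overline{R}]\big)=\overline{f'(\xi)-[\xi,R]}$ is right, the reality of this quantity is indeed supplied by the theorem (its proof identifies it with $\lambda>0$), and the equality analysis via $|f'(0)|=1$ and Schwarz rigidity correctly isolates $f(q)=qu$ with $u=1$. The only remark worth making is that part (ii) of Theorem \ref{Generalized Herzig} with $n=1$ already asserts the strict inequality $>1$ unless $f(q)=qu$, so you could skip the separate appeal to the interior Schwarz lemma; but since (ii) is itself deduced from (i) in exactly the way you argue, the two routes are the same in substance.
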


It is worth remarking here that  the Lie bracket in the preceding corollary does not vanish and $f'(\xi)$ is not necessarily a positive real number, in general; see Example \ref{ex-Schwarz1} in Section 3 for more details.
This means that in the setting of quaternions  the commonly believed fact that
 $$f'(\xi)>1$$ may fail; see \cite[Theorem 9.24]{GSS}. However, the same line of the proof of Theorem \ref{Generalized Herzig} implies simultaneously the following theorem, which of course can also be proven by reducing it, with a few preliminary steps, to the original complex result.

\begin{thm}\label{Schwarz in modulus}
Let $\xi\in \partial\mathbb B$ and $f$ be a slice regular function on $\mathbb B\cup\{\xi\}$ such that $f(\mathbb B)\subseteq\mathbb B$, $f(0)=0$ and $f(\xi)=\xi$. Then
 $$|f'(\xi)|>1$$
unless $f(q)=q$ for all $q\in\mathbb B$.
\end{thm}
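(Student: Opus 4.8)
The plan is to reduce the statement to the classical complex boundary Schwarz lemma by restricting $f$ to the complex slice through $\xi$ and applying the splitting lemma. First I would fix an imaginary unit $I$ (so $I^2=-1$) with $\xi\in\mathbb{C}_I:=\mathbb{R}+\mathbb{R}I$; this is automatic, taking $I=\mathrm{Im}(\xi)/|\mathrm{Im}(\xi)|$ when $\xi\notin\mathbb{R}$ and any $I$ otherwise. Choosing $J$ with $J^2=-1$ and $J\perp I$ splits $\mathbb{H}=\mathbb{C}_I\oplus\mathbb{C}_I J$ orthogonally, and the splitting lemma writes the restriction $f_I:=f|_{\mathbb{D}_I}$, where $\mathbb{D}_I:=\mathbb{B}\cap\mathbb{C}_I$, as $f_I=F+GJ$ with $F,G:\mathbb{D}_I\to\mathbb{C}_I$ holomorphic; since $f$ is regular on $\mathbb{B}\cup\{\xi\}$, both $F$ and $G$ are holomorphic up to $\xi$ as well.

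Next I would translate the hypotheses through this decomposition. Orthogonality gives $|f_I|^2=|F|^2+|G|^2$, so $f(\mathbb{B})\subseteq\mathbb{B}$ forces $|F|^2+|G|^2<1$ on $\mathbb{D}_I$; in particular $F$ is a holomorphic self-map of the disc $\mathbb{D}_I$. The fixed-point data $f(0)=0$ and $f(\xi)=\xi$ become $F(0)=G(0)=0$, $F(\xi)=\xi$ and $G(\xi)=0$, while the identity $(f_I)'=F'+G'J$ for the slice derivative yields $f'(\xi)=F'(\xi)+G'(\xi)J$ and hence $|f'(\xi)|^2=|F'(\xi)|^2+|G'(\xi)|^2$. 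Applying the classical boundary Schwarz inequality (\ref{C-Schwarz}) to $F$ gives $\overline{\xi}F(\xi)\overline{F'(\xi)}=\overline{F'(\xi)}\geq1$, so $F'(\xi)$ is real with $F'(\xi)\geq1$; combined with the previous identity this already produces $|f'(\xi)|^2\geq|F'(\xi)|^2\geq1$.

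The hard part will be the rigidity, i.e.\ showing $f=\mathrm{id}$ whenever $|f'(\xi)|=1$. In that case $|F'(\xi)|=1$ and $G'(\xi)=0$, and since $F'(\xi)\geq1$ is real we get $F'(\xi)=1$. Here I would invoke the equality case of the complex boundary Schwarz lemma---equivalently, apply Hopf's lemma to $g:=F(z)/z$, which satisfies $|g|\leq1$ on $\mathbb{D}_I$, $g(\xi)=1$ and $g'(\xi)=0$---to conclude that $F$ is the identity on $\mathbb{D}_I$. It then remains to kill the off-slice component: from $|G(z)|^2<1-|F(z)|^2=1-|z|^2$ we see $|G(z)|\to0$ as $|z|\to1^-$, so $G$ extends continuously to $\overline{\mathbb{D}_I}$ with zero boundary values and therefore $G\equiv0$ by the maximum modulus principle. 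Thus $f_I(z)=z$ on $\mathbb{D}_I$, and the identity principle for slice regular functions forces $f(q)=q$ on all of $\mathbb{B}$. The genuinely delicate points are this last rigidity step---both the sharp equality case of the complex lemma and the fact that the self-map constraint $|F|^2+|G|^2<1$ is what drives $G$ to vanish---rather than the inequality $|f'(\xi)|\geq1$, which is essentially immediate from the splitting.
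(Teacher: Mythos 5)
Your proof is correct, and every step holds up: the splitting $f_I=F+GJ$ with $F,G$ holomorphic up to $\xi$, the orthogonality identities $|f_I|^2=|F|^2+|G|^2$ and $|f'(\xi)|^2=|F'(\xi)|^2+|G'(\xi)|^2$, the application of the classical inequality to $F$ (which is a self-map of $\mathbb D_I$ fixing $0$ and $\xi$), and in the equality case the vanishing of the off-slice component $G$ from $|G(z)|^2<1-|z|^2$ plus the maximum principle, followed by the identity principle. Be aware, however, that this is not the route the paper takes: it derives Theorem \ref{Schwarz in modulus} as a byproduct of its proof of Theorem \ref{Generalized Herzig}, i.e.\ from the real-variable Hopf lemma applied to the subharmonic function $|f|^2$ together with the Lindel\"of inequality of Proposition \ref{L-inequality}, which (as noted in Remark \ref{rem-Schwarz2}, via $\big|\tfrac{\partial f}{\partial \xi}(\xi)\big|\geq \tfrac{\partial |f|}{\partial \xi}(\xi)$) gives the quantitative bound $|f'(\xi)|\geq 2/(1+|f'(0)|)$; strictness then follows because $|f'(0)|=1$ forces $f(q)=qu$ by the Schwarz lemma and $u=1$ from $f(\xi)=\xi$. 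The reduction to the complex result that you carry out is only mentioned there in passing (``can also be proven by reducing it, with a few preliminary steps, to the original complex result''), and your write-up supplies exactly those preliminary steps — in particular the one genuinely non-obvious point, namely that the self-map constraint kills $G$ in the rigidity case. The trade-off: your argument is more elementary and self-contained (no need for the series expansion of Theorem \ref{direction derivative} or the subharmonicity argument), while the paper's route yields the sharper lower bound and handles the Lie-bracket statement of Theorem \ref{Generalized Herzig} simultaneously.
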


An improved  version  of Theorem \ref{Generalized Herzig} for  $\xi=1$ is called Hopf's lemma, whose proof  is based on Theorem \ref{Julia-Caratheodory}.

\begin{thm}\label{Herzig}{\bf(Hopf)}
Let $f$ be a slice regular function on $\mathbb B\cup\{1\}$ such that $f(\mathbb B)\subseteq\mathbb B$ and $f(1)=1$. Then the following statements hold true:

\begin{enumerate} \item[(i)]
The derivative of $f$ at $1$ is real and
$$f'(1)\geq\frac{2|1-f(0)|^2}{1-|f(0)|^2+|f'(0)|}.$$
In particular, $$f'(1) \geq\frac{|1-f(0)|^2}{1-|f(0)|^2}.$$
Moreover, equality holds for the last  inequality if and only if $$f(q)=(1-q\bar{u})^{-\ast}\ast(q-u)(1-\bar{u})(1-u)^{-1}$$ for some point $u\in\mathbb B$.
\item[(ii)]
If further  $$f^{(k)}(0)=0, \qquad \forall\ k=0,1,\ldots,n-1$$
for some  $n\in\mathbb N$, then
 $$f'(1)\geq n+\frac{2 \big|1-f^{(n)}(0)/n!\big|^2}{1-\big|f^{(n)}(0)/n!\big|^2
 +\big|f^{(n+1)}(0)\big|/(n+1)!}.$$
In particular,
$$f'(1)\geq n+\frac{\big|1-f^{(n)}(0)/n!\big|^2}{1-\big|f^{(n)}(0)/n!\big|^2}.$$
Moreover, equality holds for the last inequality if and only if $$f(q)=q^n(1-q\bar{u})^{-\ast}\ast(q-u)(1-\bar{u})(1-u)^{-1}$$ for some point $u\in\mathbb B$.
 \end{enumerate}
\end{thm}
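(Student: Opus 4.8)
The plan is to derive everything from the quaternionic Julia--Carath\'{e}odory theorem (Theorem \ref{Julia-Caratheodory}), Julia's inequality (Theorem \ref{Julia}), and the quaternionic Schwarz--Pick theorem of \cite{BS}. First I would record the structural input. Since $f$ is slice regular on $\mathbb B\cup\{1\}$, both $f$ and its slice derivative $f'$ are continuous at $1$, so condition (iii) of Theorem \ref{Julia-Caratheodory} holds; hence all the equivalent conditions hold, and writing $\alpha=\liminf_{q\to 1}\frac{1-|f(q)|}{1-|q|}<\infty$, item (c) gives $\eta=f(1)=1$ together with $f'(1)=\alpha\eta=\alpha$. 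In particular $f'(1)=\alpha>0$ is a positive real number, which settles the first assertion of (i) and explains why no Lie-bracket correction survives here (the bracket $[\bar 1,\,\cdot\,]$ of Theorem \ref{Generalized Herzig} vanishes at the real point $\xi=1$). The weak estimate $f'(1)\geq\frac{|1-f(0)|^2}{1-|f(0)|^2}$ then follows at once by evaluating Julia's inequality (\ref{eq:10}) (with $\xi=\eta=1$) at $q=0$, and the equality clause of Theorem \ref{Julia} forces $f$ to be the indicated normalized regular M\"obius transformation in the extremal case.

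Next I would prove the sharp bound in (i). Set $a=f(0)$ and $b=f'(0)$, and introduce the regular M\"obius modification $h:=(1-f\bar a)^{-\ast}\ast(f-a)$, which is slice regular. By the quaternionic Schwarz--Pick theorem it maps $\mathbb B$ into $\mathbb B$, and a short $\ast$-product computation at the origin (using $v(0)=0$ for $v=f-a$) gives $h(0)=0$ together with $|h'(0)|=|b|/(1-|a|^2)=:\rho$; at the real boundary point one finds $h(1)=(1-\bar a)^{-1}(1-a)=:\zeta\in\partial\mathbb B$. Writing $\alpha_h=\liminf_{q\to1}\frac{1-|h(q)|}{1-|q|}$, the two facts I need are the angular-derivative scaling $\alpha_h=\frac{1-|a|^2}{|1-a|^2}\,\alpha$ and the refined boundary estimate $\alpha_h\geq\frac{2}{1+\rho}$. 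Combining them yields $f'(1)=\alpha=\frac{|1-a|^2}{1-|a|^2}\,\alpha_h\geq\frac{2|1-a|^2}{1-|a|^2+|b|}$, as claimed.

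The refined boundary estimate is the clean part. Since $h(0)=0$, I factor $h=q\ast k=qk$ with $k$ slice regular; the quaternionic Schwarz lemma gives $\|k\|_\infty\leq 1$, with $k(0)=h'(0)$ (so $|k(0)|=\rho$) and $k(1)=\zeta\in\partial\mathbb B$. Applying Julia's inequality to $k$ at $\xi=1$, $\eta=\zeta$ and evaluating at $q=0$ gives $\alpha_k\geq\frac{|\zeta-k(0)|^2}{1-\rho^2}\geq\frac{(1-\rho)^2}{1-\rho^2}=\frac{1-\rho}{1+\rho}$, where $\alpha_k=\liminf_{q\to1}\frac{1-|k(q)|}{1-|q|}$; and from $|h(q)|=|q|\,|k(q)|$ one reads off $\alpha_h=1+\alpha_k$, whence $\alpha_h\geq\frac{2}{1+\rho}$. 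Part (ii) reduces to part (i): the hypotheses let me factor $f=q^n\ast g=q^ng$ with $g$ slice regular, the iterated quaternionic Schwarz lemma gives $g:\mathbb B\to\mathbb B$, and $g(1)=1$, $g(0)=f^{(n)}(0)/n!$, $g'(0)=f^{(n+1)}(0)/(n+1)!$; differentiating $f=q^ng$ at the real point $1$ gives $f'(1)=ng(1)+g'(1)=n+g'(1)$, so applying part (i) to $g$ produces the stated bound, with the weaker ``in particular'' inequalities and the equality extremals $f(q)=q^n(1-q\bar u)^{-\ast}\ast(q-u)(1-\bar u)(1-u)^{-1}$ read off from the equality clause of Theorem \ref{Julia}.

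The main obstacle is the angular-derivative scaling $\alpha_h=\frac{1-|a|^2}{|1-a|^2}\,\alpha$. Because slice regular functions are not closed under composition, $h$ is only a $\ast$-product and is not the pointwise M\"obius image $\psi_a\circ f$, so the boundary dilation $\frac{1-|h(q)|}{1-|f(q)|}\to\frac{1-|a|^2}{|1-a|^2}$ cannot be quoted from the complex Schwarz--Pick identity. Controlling $|h(q)|$ as $q\to 1$ is where the genuinely quaternionic analysis is needed: I would use the modulus formula $|(g_1\ast g_2)(q)|=|g_1(q)|\,\big|g_2\big(g_1(q)^{-1}qg_1(q)\big)\big|$ and show that the inner rotation $g_1(q)^{-1}q\,g_1(q)$ degenerates to the identity as $f(q)\to 1$, so that near the real boundary point the $\ast$-product $h$ behaves like the genuine composition and inherits its dilation factor. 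This boundary control of the $\ast$-modified self-map, rather than any of the one-variable complex estimates, is where I expect most of the effort to go.
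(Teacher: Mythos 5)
Your overall strategy is the same as the paper's: normalize by the regular M\"obius factor $(1-f\overline{f(0)})^{-\ast}\ast(f-f(0))$ to move $f(0)$ to the origin, then peel off a factor of $q$ and apply Julia's inequality at $q=0$ together with the Julia--Carath\'eodory theorem at the real boundary point $1$; part (ii) by dividing by $q^{n}$. Your computations of $h(0)=0$, $|h'(0)|=|f'(0)|/(1-|f(0)|^2)$, the bound $\alpha_h\geq 2/(1+\rho)$ via $\alpha_h=1+\alpha_k$, and the identification of the extremals through the equality clause of Theorem \ref{Julia} all match the paper. The one substantive divergence is the transfer step, and it is exactly the step you leave unfinished: you keep the unnormalized quotient $h$, so $h(1)=\zeta=(1-\bar a)^{-1}(1-a)\neq 1$ in general, and you must prove the dilation identity $\alpha_h=\frac{1-|a|^2}{|1-a|^2}\,\alpha$ by controlling $|h(q)|$ near the boundary, which you flag as ``where most of the effort will go'' and only sketch. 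The paper sidesteps this entirely: it right-multiplies by $(1-\overline{f(0)})(1-f(0))^{-1}$ so that $g(1)=1$, and then the needed relation $f'(1)=\frac{|1-f(0)|^2}{1-|f(0)|^2}\,g'(1)$ is a one-line slice-derivative computation at the \emph{real} point $1$, where $\ast$-products and $\ast$-inverses evaluate pointwise (the Leibniz rule plus $F(1)=1-\bar a$, $G(1)=1-a$, and the fact that $f'(1)=\alpha$ is real). This is the cleaner route and is why the paper never needs any boundary modulus control of the $\ast$-product.

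Your gap is fillable, though, and more easily than your sketch suggests: by Proposition \ref{prop:Quotient Relation}, $h(q)=F(T_F(q))^{-1}G(T_F(q))$ with $T_F$ preserving modulus and real part, so $\frac{1-|h(q)|}{1-|q|}$ is the classical pointwise M\"obius dilation evaluated at $\tilde q=T_F(q)$, and $\tilde q\to 1$ exactly when $q\to 1$; there is no need to show that the ``inner rotation degenerates.'' Two small points of hygiene that the paper also glosses over but that you should keep in mind: $k=q^{-1}h$ (and $q^{-n}f$ in part (ii)) maps $\mathbb B$ only into $\overline{\mathbb B}$, so either invoke the maximum modulus principle to dispose of the unimodular-constant case or note that Julia's inequality is trivial there; and to apply Theorem \ref{Julia} to $k$ you should record that $k$ is regular at $1$ (since $F^s(1)=|1-\bar a|^2\neq 0$), which guarantees the required sequence with both limits existing.
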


Notice that  the Julia-Carath\'{e}odory theorem in Theorem \ref{Julia-Caratheodory} holds only for the real boundary points $\xi=\pm1\in\partial\mathbb B$.
 As shown by
Theorem \ref{Generalized Herzig},  the relation $$f'(\xi)=\alpha \overline{\xi}f(\xi)$$
 does  no longer hold in general in  the setting of quaternions  under the condition that $$\alpha:=\liminf\limits_{\mathbb B\ni q\rightarrow \xi}\dfrac{1-|f(q)|}{1-|q|}<+\infty$$  in contrast to the complex setting.
Consequently, the general Julia-Carath\'{e}odory theorem (the case that $\pm1\neq\xi\in\partial\mathbb B$) will be much more delicate and requires further research. At the same time, this phenomenon reflects fully the special role of the real axis in the theory of slice regular functions.

The outline of this paper is as follows. In Section \ref{Preliminaries}, we set up basic notations and give some preliminary results. In Section \ref{Proofs of Main Theorems}, we give the detailed proofs of main results for slice regular self-mappings of the open unit ball $\mathbb B$. To this end, we shall establish the Lindel\"{o}f principle and the Lindel\"{o}f inequality. The analogous results for slice regular self-mappings of the right half-space $\mathbb H^+$ are established in Section \ref{Julia-Caratheodory theorem}, of which the starting point is the right half-space version of the Schwarz-Pick theorem.

\section{Preliminaries}\label{Preliminaries}

We recall in this section some preliminary definitions and results on slice regular functions.
To have a more complete insight on the theory, we refer the reader to the monograph \cite{GSS}.

Let $\mathbb H$ denote the non-commutative, associative, real algebra of quaternions with standard basis $\{1,\,i,\,j, \,k\}$,  subject to the multiplication rules
$$i^2=j^2=k^2=ijk=-1.$$
 Every element $q=x_0+x_1i+x_2j+x_3k$ in $\mathbb H$ is composed by the \textit{real} part ${\rm{Re}}\, (q)=x_0$ and the \textit{imaginary} part ${\rm{Im}}\, (q)=x_1i+x_2j+x_3k$. The \textit{conjugate} of $q\in \mathbb H$ is then $\bar{q}={\rm{Re}}\, (q)-{\rm{Im}}\, (q)$ and its \textit{modulus} is defined by $|q|^2=q\overline{q}=|{\rm{Re}}\, (q)|^2+|{\rm{Im}}\, (q)|^2$. We can therefore calculate the multiplicative inverse of each $q\neq0$ as $ q^{-1} =|q|^{-2}\overline{q}$.
 Every $q \in \mathbb H $ can be expressed as $q = x + yI$, where $x, y \in \mathbb R$ and
$$I=\dfrac{{\rm{Im}}\, (q)}{|{\rm{Im}}\, (q)|}$$
 if ${\rm{Im}}\, q\neq 0$, otherwise we take $I$ arbitrarily such that $I^2=-1$.
Then $I $ is an element of the unit 2-sphere of purely imaginary quaternions
$$\mathbb S=\big\{q \in \mathbb H:q^2 =-1\big\}.$$

For every $I \in \mathbb S $ we will denote by $\mathbb C_I$ the plane $ \mathbb R \oplus I\mathbb R $, isomorphic to $ \mathbb C$, and, if $\Omega \subseteq \mathbb H$, by $\Omega_I$ the intersection $ \Omega \cap \mathbb C_I $. Also, for $R>0$, we will denote the open ball centred at the origin with radius $R$ by
$$B(0,R)=\big\{q \in \mathbb H:|q|<R\big\}.$$

We can now recall the definition of slice regularity.
\begin{defn} \label{de: regular} Let $\Omega$ be a domain in $\mathbb H$. A function $f :\Omega \rightarrow \mathbb H$ is called \emph{slice} \emph{regular} if, for all $ I \in \mathbb S$, its restriction $f_I$ to $\Omega_I$ is \emph{holomorphic}, i.e., it has continuous partial derivatives and satisfies
$$\bar{\partial}_I f(x+yI):=\frac{1}{2}\left(\frac{\partial}{\partial x}+I\frac{\partial}{\partial y}\right)f_I (x+yI)=0$$
for all $x+yI\in \Omega_I $.
 \end{defn}
As shown in \cite {CGSS}, the natural   domains of definition  of slice regular functions are the  so-called axially symmetric slice domains.

\begin{defn} \label{de: domain}
Let $\Omega$ be a domain in $\mathbb H $.  $\Omega$ is called a \textit{slice domain}  if $\Omega$ intersects the real axis and $\Omega_I$  is a domain
of $ \mathbb C_I $  for any $I \in \mathbb S $.

Moreover,  if  $x + yI \in \Omega$ implies $x + y\mathbb S \subseteq \Omega $ for any $x,y \in \mathbb R $ and $I\in \mathbb S$, then
 $\Omega$  is called an \textit{axially symmetric slice domain}.
\end{defn}

From now on, we will omit the term `slice' when referring to slice regular functions and will focus mainly on regular functions on $B(0,R)=\big\{q \in \mathbb H:|q|<R\big\}$ and the right half-space $\mathbb H^+=\big\{q\in\mathbb H: \textrm{Re}(q)>0\big\}.$
For regular functions the natural definition of derivative is given by the following (see \cite{GS1,GS2}).
\begin{defn} \label{de: derivative}
Let $f :B(0,R) \rightarrow \mathbb H$  be a regular function. For each $I\in\mathbb S$, the $I$- derivative of $f$ at $q=x+yI$
is defined by
$$\partial_I f(x+yI):=\frac{1}{2}\left(\frac{\partial}{\partial x}-I\frac{\partial}{\partial y}\right)f_I (x+yI)$$
on $\Omega_I$. The slice derivative of $f$ is the function $f'$ defined by $\partial_I f$ on $\Omega_I$ for all $I\in\mathbb S$.
 \end{defn}
The definition is well-defined because, by direct calculation, $\partial_I f=\partial_J f$ in $\Omega_I\cap \Omega_J$ for any choice of $I$, $J\in\mathbb S$. Furthermore, notice that the operators $\partial_I$ and $\bar{\partial}_I $ commute, and $\partial_I f=\frac{\partial f}{\partial x}$ for regular functions. Therefore, the slice derivative of a regular function is still regular so that we can iterate the differentiation to obtain the $n$-th
slice derivative
$$\partial^{n}_I f=\frac{\partial^{n} f}{\partial x^{n}},\quad\,\forall \,\, n\in \mathbb N. $$

In what follows, for the sake of simplicity, we will denote the $n$-th slice derivative by $f^{(n)}$ for every $n\in \mathbb N$. Incidentally, the slice derivative $f'$ is initially called Cullen derivative in \cite{GS1,GS2} and is denoted by $\partial_Cf$ due to the work of Cullen \cite{Cullen}. Here we follow the standard notations and terminology in the monograph \cite{GSS}.

In the theory of regular functions,  the following splitting lemma (see \cite{GS2}) relates closely slice regularity to classical holomorphy.
\begin{lem}{\bf(Splitting Lemma)}\label{eq:Splitting}
Let $f$ be a regular function on $B = B(0,R)$. Then for any $I\in \mathbb S$ and any $J\in \mathbb S$ with $J\perp I$, there exist two holomorphic functions $F,G:B_I\rightarrow \mathbb C_I$ such that
$$f_I(z)=F(z)+G(z)J,\qquad \forall\ z=x+yI\in B_I.$$
\end{lem}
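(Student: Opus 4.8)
The plan is to exploit the fact that, for $I,J\in\mathbb S$ with $J\perp I$, the set $\{1,I,J,IJ\}$ is an orthonormal real basis of $\mathbb H$, so that $\mathbb H$ splits as the real direct sum $\mathbb C_I\oplus\mathbb C_I J$. First I would verify this splitting: since $I^2=J^2=-1$ and $I\perp J$ forces $IJ=-JI$, one checks that $K:=IJ$ again lies in $\mathbb S$ and that $\{1,I,J,K\}$ obeys the quaternionic multiplication relations. Consequently every $q\in\mathbb H$ admits a unique decomposition $q=a+bJ$ with $a,b\in\mathbb C_I$, because $\mathbb C_I J=\mathbb R J\oplus\mathbb R K$ is precisely the orthogonal complement of $\mathbb C_I=\mathbb R\oplus\mathbb R I$.

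Applying this pointwise to the restriction $f_I:B_I\to\mathbb H$, I would define $F,G:B_I\to\mathbb C_I$ by the unique representation $f_I(z)=F(z)+G(z)J$. These two functions inherit continuous partial derivatives from $f_I$, which is $C^1$ by the definition of slice regularity, since the projections of $\mathbb H$ onto $\mathbb C_I$ and onto $\mathbb C_I J$ are real-linear and hence smooth.

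The crux is to show that $F$ and $G$ are holomorphic on $B_I$, viewed as a domain in $\mathbb C_I\cong\mathbb C$ with imaginary unit $I$. To this end I would substitute $f_I=F+GJ$ into the regularity equation $\bar\partial_I f_I=0$, that is, $\big(\tfrac{\partial}{\partial x}+I\tfrac{\partial}{\partial y}\big)f_I=0$. The key point is the interplay with non-commutativity: because $F$ and $G$ take values in $\mathbb C_I$ they commute with $I$, while $J$ is a constant standing on the right, so $I\,\tfrac{\partial}{\partial y}(GJ)=\big(I\,\tfrac{\partial G}{\partial y}\big)J$ with $I\,\tfrac{\partial G}{\partial y}\in\mathbb C_I$. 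The equation thus becomes
$$\left(\frac{\partial F}{\partial x}+I\,\frac{\partial F}{\partial y}\right)+\left(\frac{\partial G}{\partial x}+I\,\frac{\partial G}{\partial y}\right)J=0,$$
with both coefficients lying in $\mathbb C_I$.

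Finally, invoking the uniqueness of the decomposition $\mathbb H=\mathbb C_I\oplus\mathbb C_I J$, the $\mathbb C_I$- and the $\mathbb C_I J$-components must vanish separately, yielding $\tfrac{\partial F}{\partial x}+I\,\tfrac{\partial F}{\partial y}=0$ and $\tfrac{\partial G}{\partial x}+I\,\tfrac{\partial G}{\partial y}=0$. These are exactly the Cauchy--Riemann equations for $\mathbb C_I$-valued functions under the complex structure given by $I$, so $F$ and $G$ are holomorphic, completing the proof. I expect no serious obstacle here; the only point demanding care is the bookkeeping forced by non-commutativity---keeping $J$ on the right and using that $F$ and $G$ commute with $I$---which is precisely what makes the two scalar components decouple cleanly.
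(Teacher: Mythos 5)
Your proof is correct, and it is the standard argument for the Splitting Lemma. The paper itself does not prove this statement --- it is quoted as a known result with a citation to Gentili--Struppa \cite{GS2} --- and your argument is essentially the one found there: decompose $\mathbb H=\mathbb C_I\oplus\mathbb C_I J$ using the orthonormal basis $\{1,I,J,IJ\}$, keep $J$ on the right so that the $\mathbb C_I$-valued coefficients commute with $I$, and observe that $\bar\partial_I f_I=0$ decouples into the Cauchy--Riemann equations for $F$ and $G$ separately by uniqueness of the decomposition. All the points that require care (that $IJ=-JI$ and $IJ\in\mathbb S$, the associativity bookkeeping in $I\,\partial_y(GJ)=(I\,\partial_y G)J$, and the componentwise vanishing) are handled properly.
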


%
%

Since the regularity does not keep under pointwise product of two regular functions a new multiplication operation, called the regular product (or $\ast$-product), appears via  a suitable modification of the usual one subject to noncommutative setting. The regular product plays a key role in the theory of slice regular functions. On open balls centred at the origin, the $\ast$-product of two regular functions is defined by means of their power series
expansions (see, e.g., \cite{GSS1,CGSS}).

\begin{defn}\label{R-product}
Let $f$, $g:B=B(0,R)\rightarrow \mathbb H$ be two regular functions and let
$$f(q)=\sum\limits_{n=0}^{\infty}q^na_n,\qquad g(q)=\sum\limits_{n=0}^{\infty}q^nb_n$$
be their series expansions. The regular product (or $\ast$-product) of $f$ and $g$ is the function defined by
$$f\ast g(q)=\sum\limits_{n=0}^{\infty}q^n\bigg(\sum\limits_{k=0}^n a_kb_{n-k}\bigg)$$
and it is regular on $B$.
\end{defn}
Notice that the $\ast$-product is associative and is not, in general, commutative. Its connection with the usual pointwise product is clarified by the following result \cite{GSS1,CGSS}.
\begin{prop} \label{prop:RP}
Let $f$ and $g$ be  regular on $B=B(0,R)$. Then for all $q\in B$,
$$f\ast g(q)=
\left\{
\begin{array}{lll}
f(q)g(f(q)^{-1}qf(q)) \qquad \,\,if \qquad f(q)\neq 0;
\\
\qquad  \qquad  0\qquad  \qquad \qquad if \qquad f(q)=0.
\end{array}
\right.
$$
\end{prop}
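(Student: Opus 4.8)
The plan is to prove the whole statement by a single rearrangement of the defining power series, using only absolute convergence on $B=B(0,R)$, associativity of the product, and the elementary telescoping identity $(p^{-1}qp)^m=p^{-1}q^mp$. Write
$$f(q)=\sum_{n=0}^\infty q^na_n,\qquad g(q)=\sum_{n=0}^\infty q^nb_n,$$
both absolutely convergent on $B$. Fix $q\in B$. By the definition of the $\ast$-product together with the reindexing $n=k+m$,
$$f\ast g(q)=\sum_{n=0}^\infty q^n\sum_{k=0}^n a_kb_{n-k}=\sum_{k=0}^\infty\sum_{m=0}^\infty q^{k+m}a_kb_m,$$
and since $\sum_k|q|^k|a_k|$ and $\sum_m|q|^m|b_m|$ are finite, the family $\{q^{k+m}a_kb_m\}_{k,m}$ is absolutely summable, so all the regroupings below are legitimate.

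The key intermediate step I would isolate is the identity
$$f\ast g(q)=\sum_{m=0}^\infty q^m\,f(q)\,b_m,$$
which holds with no hypothesis on $f(q)$. It follows from the double sum by writing $q^{k+m}a_k=q^m(q^ka_k)$ and summing over $k$ first: for each fixed $m$ the factor $q^m$ comes out on the left and $b_m$ on the right, leaving $\sum_k q^ka_k=f(q)$ in the middle.

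From this, the two cases of the proposition are immediate. If $f(q)=0$, every summand $q^mf(q)b_m$ vanishes and $f\ast g(q)=0$. If $f(q)=p\neq0$, I conjugate each term by $p$: inserting $pp^{-1}=1$ gives $q^mp=p\,(p^{-1}q^mp)=p\,(p^{-1}qp)^m$, so
$$f\ast g(q)=\sum_{m=0}^\infty q^m p\,b_m=p\sum_{m=0}^\infty (p^{-1}qp)^m b_m=p\,g(p^{-1}qp)=f(q)\,g\big(f(q)^{-1}qf(q)\big).$$
One should note that $g$ may indeed be evaluated at $p^{-1}qp$, since conjugation preserves the modulus, $|p^{-1}qp|=|q|<R$, so that $p^{-1}qp\in B$ and the series for $g$ converges there.

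The only genuinely analytic point—and the step I would be most careful about—is the justification of interchanging the order of summation and of the various regroupings of the double series; everything else is purely formal algebra in the associative (but noncommutative) ring $\mathbb H$. This is dispatched by the absolute convergence of the two power series on $B$, which is exactly what allows the noncommutativity to be handled term by term without any reordering artifacts.
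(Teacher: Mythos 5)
Your proof is correct, and it is essentially the standard argument: the paper itself states this proposition without proof, citing \cite{GSS1,CGSS}, where exactly this computation (rearranging the absolutely convergent double series into $\sum_m q^m f(q) b_m$ and then conjugating by $p=f(q)$ via $(p^{-1}qp)^m=p^{-1}q^mp$) is carried out. The two points you flag — absolute convergence justifying the regrouping, and $|p^{-1}qp|=|q|<R$ so that $g$ may be evaluated there — are precisely the ones that need saying, so there is nothing to add.
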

We remark that if $q=x+yI$ and $f(q)\neq0$, then $f(q)^{-1}qf(q)$ has the same modulus and same real part as $q$. Therefore $f(q)^{-1}qf(q)$ lies in the same 2-sphere $x+y\mathbb S$ as $q$. Notice that a zero $x_0+y_0I$ of the function $g$ is not necessarily a zero of $f\ast g$, but some element on the same sphere $x_0+y_0\mathbb S$ does. In particular, a
real zero of $g$ is still a zero of $f\ast g$. To present a characterization of the structure of the zero set of a regular function $f$, we need to introduce the following functions.
\begin{defn} \label{de: R-conjugate}
Let $ f(q)=\sum\limits_{n=0}^{\infty}q^na_n $ be a regular function on $B=B(0,R)$. We define the \emph{regular conjugate} of $f$ as
$$f^c(q)=\sum\limits_{n=0}^{\infty}q^n\bar{a}_n,$$
and the \emph{symmetrization} of $f$ as
$$f^s(q)=f\ast f^c(q)=f^c\ast f(q)=\sum\limits_{n=0}^{\infty}q^n\bigg(\sum\limits_{k=0}^n a_k\bar{a}_{n-k}\bigg).$$
Both $f^c$ and $f^s$ are regular functions on $B$.
\end{defn}
We are now able to define the inverse element of a regular function $f$ with respect to the $\ast$-product. Let $\mathcal{Z}_{f^s}$ denote the zero set of the symmetrization $f^s$ of $f$.
\begin{defn} \label{de: R-Inverse}
Let $f$ be a regular function on $B=B(0,R)$. If $f$ does not vanish identically, its \emph{regular reciprocal} is the function defined by
$$f^{-\ast}(q):=f^s(q)^{-1}f^c(q)$$
and it is regular on $B \setminus \mathcal{Z}_{f^s}$.

\end{defn}
The following result shows that the regular quotient is nicely related to the pointwise quotient (see \cite{Stop1, Stop2}).
\begin{prop} \label{prop:Quotient Relation}
Let $f$ and $g$ be regular  on  $B=B(0,R)$. Then for all $q\in B \setminus \mathcal{Z}_{f^s}$, $$f^{-\ast}\ast g(q)=f(T_f(q))^{-1}g(T_f(q)),$$
where $T_f:B \setminus \mathcal{Z}_{f^s}\rightarrow B \setminus \mathcal{Z}_{f^s}$ is defined by
$T_f(q)=f^c(q)^{-1}qf^c(q)$. Furthermore, $T_f$ and $T_{f^c}$ are mutual inverses so that $T_f$ is a diffeomorphism.
\end{prop}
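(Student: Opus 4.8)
The plan is to reduce everything to the pointwise product formula of Proposition \ref{prop:RP}, exploiting the single crucial fact that the symmetrization $f^s$ has \emph{real} power series coefficients. Indeed, writing $f^s(q)=\sum_n q^n c_n$ with $c_n=\sum_{k=0}^n a_k\bar a_{n-k}$, one checks that $\bar c_n=c_n$, so each $c_n\in\mathbb R$. Consequently, for $q=x+yI$ the value $f^s(q)$ lies in the slice $\mathbb C_I=\mathbb R\oplus I\mathbb R$ and therefore \emph{commutes with $q$}; moreover the zero set $\mathcal Z_{f^s}$ is a union of $2$-spheres $x+y\mathbb S$. This commutativity is the lever that collapses the noncommutative conjugations appearing below into trivial ones.

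First I would record two preliminary facts on the domain $B\setminus\mathcal Z_{f^s}$. Applying Proposition \ref{prop:RP} to $f^s=f\ast f^c=f^c\ast f$ shows that $f(q)=0$ or $f^c(q)=0$ forces $f^s(q)=0$; hence on $B\setminus\mathcal Z_{f^s}$ both $f(q)\neq0$ and $f^c(q)\neq0$, so that $T_f(q)=f^c(q)^{-1}qf^c(q)$ and $T_{f^c}(q)=f(q)^{-1}qf(q)$ are well defined there, using $(f^c)^c=f$. Since $T_f$ leaves the modulus and the real part of $q$ unchanged, it maps each sphere $x+y\mathbb S$ into itself; as $\mathcal Z_{f^s}$ is a union of such spheres, $T_f$ indeed maps $B\setminus\mathcal Z_{f^s}$ into itself.

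For the identity I would apply the pointwise form of Proposition \ref{prop:RP}, which remains valid on the common domain of regularity $B\setminus\mathcal Z_{f^s}$, to the product $f^{-\ast}\ast g$. Writing $s:=f^s(q)$ and $c:=f^c(q)$, so that $f^{-\ast}(q)=s^{-1}c$, the conjugation in Proposition \ref{prop:RP} becomes
\[
f^{-\ast}(q)^{-1}\,q\,f^{-\ast}(q)=c^{-1}s\,q\,s^{-1}c=c^{-1}qc=T_f(q),
\]
where the middle equality uses that $s$ commutes with $q$. Next, applying Proposition \ref{prop:RP} to $f^s=f^c\ast f$ gives $s=f^s(q)=c\,f(T_f(q))$, whence $f(T_f(q))^{-1}=s^{-1}c=f^{-\ast}(q)$. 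Combining the two displays,
\[
f^{-\ast}\ast g(q)=f^{-\ast}(q)\,g\big(T_f(q)\big)=f(T_f(q))^{-1}g(T_f(q)),
\]
which is the asserted formula.

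Finally, for the inverse relation I would compute $T_{f^c}\circ T_f$. From $f(T_f(q))=c^{-1}s$ one gets
\[
T_{f^c}(T_f(q))=f(T_f(q))^{-1}\,T_f(q)\,f(T_f(q))=s^{-1}c\,(c^{-1}qc)\,c^{-1}s=s^{-1}qs=q,
\]
again using that $s=f^s(q)$ commutes with $q$. Since $(f^c)^c=f$ and $(f^c)^s=f^c\ast f=f^s$, the roles of $f$ and $f^c$ are symmetric, so the same computation yields $T_f\circ T_{f^c}=\mathrm{id}$; thus $T_f$ is a bijection with inverse $T_{f^c}$, and smoothness of both maps follows from that of $f$, $f^c$ and of $q\mapsto h(q)^{-1}qh(q)$ away from the zeros of $h$, giving the diffeomorphism claim. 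The only genuine care needed is bookkeeping of the noncommutative inverses together with confirming at each step that the relevant quaternions $s$, $c$, $f(T_f(q))$ are nonzero on $B\setminus\mathcal Z_{f^s}$; the commutativity of $f^s(q)$ with $q$ is what makes every conjugation collapse, and is the heart of the argument.
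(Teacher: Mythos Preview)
The paper does not actually prove this proposition: it is quoted from the literature (the citations to \cite{Stop1,Stop2}) and stated without proof in the preliminaries section. So there is no ``paper's own proof'' to compare against; I can only assess your argument on its merits.

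Your proof is correct and is in fact the standard one. The crux---that $f^s$ has real coefficients, hence $f^s(q)$ commutes with $q$---is exactly right, and from it the two computations (collapsing $f^{-\ast}(q)^{-1}qf^{-\ast}(q)$ to $T_f(q)$, and identifying $f^{-\ast}(q)$ with $f(T_f(q))^{-1}$ via $f^s=f^c\ast f$) go through cleanly. The inverse relation $T_{f^c}\circ T_f=\mathrm{id}$ is also handled correctly.

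One small point of bookkeeping: Proposition~\ref{prop:RP} as stated in this paper is only for regular functions on the full ball $B(0,R)$, whereas you apply it to $f^{-\ast}$ and to $f^{-\ast}\ast g$, which are regular only on the axially symmetric slice domain $B\setminus\mathcal Z_{f^s}$. You do flag this (``remains valid on the common domain of regularity''), and indeed the same pointwise formula holds on any axially symmetric slice domain; but if you want the argument to be self-contained within the paper's stated preliminaries you should either cite the more general version or note that the proof of Proposition~\ref{prop:RP} is local and carries over verbatim.
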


Let us set
$$U(x_0+y_0\mathbb S, R)=\big\{q\in\mathbb H:\big|(q-x_0)^2+y_0^2\big|<R^2\big\}$$
for all $x_0, y_0\in\mathbb R$ and all $R>0$. The following result was proved in \cite{Stop3}; see Theorems 4.1 and 6.1 there for more details.
\begin{thm}\label{direction derivative}
Let $f$ be a regular function on a symmetric slice domain $\Omega$, and let $q_0=x_0+Iy_0\in U(x_0+y_0\mathbb S, R)\subseteq\Omega$. Then there exists $\{A_n\}_{n\in\mathbb N}\subset \mathbb H$ such that
 \begin{equation}\label{150}
 f(q)=\sum\limits_{n=0}^{\infty}
 \big((q-x_0)^2+y_0^2\big)^n\big(A_{2n}+(q-q_0)A_{2n+1}\big)
 \end{equation}
 for all $q\in U(x_0+y_0\mathbb S, R).$
 \end{thm}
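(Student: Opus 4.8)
The plan is to reduce the statement to a one–variable holomorphic expansion on a single slice via the Splitting Lemma, to prove that expansion by a reflection (symmetrization) argument, and then to propagate it to the whole axially symmetric region by the identity principle. Assume first $y_0\neq0$ and fix $I\in\mathbb S$ to be the imaginary unit occurring in $q_0=x_0+Iy_0$, together with some $J\in\mathbb S$, $J\perp I$; set $E_R:=U(x_0+y_0\mathbb S,R)\cap\mathbb C_I$. Writing the real polynomial $\Delta(z):=(z-x_0)^2+y_0^2=(z-q_0)(z-\bar q_0)$, one checks that $E_R=\{z\in\mathbb C_I:|\Delta(z)|<R^2\}$ is exactly the Cassini region with foci $q_0,\bar q_0\in\mathbb C_I$. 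By Lemma \ref{eq:Splitting} the restriction splits as $f_I=F+GJ$ with $F,G:E_R\to\mathbb C_I$ holomorphic, so it suffices to expand a single holomorphic function on $E_R$.

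The analytic heart is the claim that every holomorphic $h$ on $E_R$ can be written as $h(z)=P(\Delta(z))+(z-q_0)\,Q(\Delta(z))$ with $P,Q$ holomorphic on the disc $\{|w|<R^2\}$. To prove it I would use the reflection $\sigma(z)=2x_0-z$, under which $E_R$ and $\Delta$ are invariant while $z-q_0\mapsto-(z-\bar q_0)$. Evaluating the desired identity at $z$ and at $\sigma(z)$ yields two linear relations whose solution gives the explicit candidates
\[
Q(\Delta(z))=\frac{h(z)-h(\sigma(z))}{2(z-x_0)},\qquad P(\Delta(z))=\frac{h(z)+h(\sigma(z))}{2}+Iy_0\,Q(\Delta(z)).
\]
Both right–hand sides are $\sigma$–invariant, hence genuine functions of $w=\Delta(z)$; the apparent singularity at the critical point $z=x_0$ is removable since the numerator of $Q$ vanishes there, and the standard fact that an even holomorphic function of $z-x_0$ is a holomorphic function of $(z-x_0)^2$ shows that $P,Q$ are single–valued and holomorphic across the critical value $w=y_0^2$. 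Expanding $P(w)=\sum_n a_{2n}w^n$ and $Q(w)=\sum_n a_{2n+1}w^n$ on $\{|w|<R^2\}$ and substituting $w=\Delta(z)$ then gives $h(z)=\sum_n\Delta(z)^n\big(a_{2n}+(z-q_0)a_{2n+1}\big)$, converging uniformly on compact subsets of $E_R$ because $|\Delta(z)|$ is bounded away from $R^2$ there.

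Applying this to $F$ and $G$, collecting the coefficients into the quaternions $A_n:=a_n^{F}+a_n^{G}J$, and using that $\Delta(z)$ and $(z-q_0)$ lie in $\mathbb C_I$ and commute with $\Delta^n$, I obtain $(\ref{150})$ for $q=z\in E_R$. To pass from the slice to all of $U(x_0+y_0\mathbb S,R)$ I would show that the right–hand series of $(\ref{150})$ defines a regular function on $U$: each summand is regular, since the factor $\Delta^n$ has real coefficients and hence its pointwise product with the degree–one regular polynomial $A_{2n}+(q-q_0)A_{2n+1}$ coincides with their $\ast$–product (cf.\ Proposition \ref{prop:RP}) and is regular; moreover the coefficient bounds $\limsup_n|A_{2n}|^{1/n}\le R^{-2}$ and $\limsup_n|A_{2n+1}|^{1/n}\le R^{-2}$ inherited from $P,Q$, together with $|\Delta(q)|<R^2$ on $U$, give local uniform convergence and thus regularity of the sum. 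As this regular sum agrees with $f$ on the full slice $E_R=U\cap\mathbb C_I$, the identity principle for regular functions on the symmetric slice domain $U$ (see \cite{GSS}) forces equality on all of $U$.

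The step I expect to be the main obstacle is the holomorphic claim of the second paragraph, namely verifying that $P$ and $Q$ are \emph{single–valued} and holomorphic on the \emph{entire} disc $\{|w|<R^2\}$: this requires care with the two–sheeted branching of $z\mapsto\Delta(z)$, with the removable singularity at the critical point, and with the fact that $E_R$ is connected or splits into two ovals according to whether $R>y_0$ or $R<y_0$, each case demanding a consistency check for the two local determinations of the inverse $z(w)$. The degenerate case $y_0=0$, in which the sphere collapses to a real point and $(\ref{150})$ reduces to the ordinary power series, should be handled separately but is immediate.
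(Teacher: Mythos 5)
Your argument is essentially correct, but note first that the paper does not actually prove Theorem \ref{direction derivative}: it is quoted from Stoppato \cite{Stop3} (Theorems 4.1 and 6.1), so the real comparison is with that source. Stoppato's proof is intrinsic and quaternionic: the coefficients are produced by iterating the regular difference quotients $R_{q_0}$ and $R_{\overline{q_0}}$ (whence $A_1=\partial_s f(q_0)$ and $A_2=R_{\overline{q_0}}R_{q_0}f(q_0)$, exactly the identifications the paper uses immediately after the statement), a finite expansion with remainder is established by induction in the $\ast$-algebra of regular functions, and convergence on the sets $U(x_0+y_0\mathbb S,R)$ follows from Cauchy-type estimates. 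Your route --- Splitting Lemma, then the decomposition $h=P(\Delta)+(z-q_0)Q(\Delta)$ of a holomorphic function on the Cassini region via the reflection $\sigma(z)=2x_0-z$, then propagation off the slice --- is genuinely different and does work; it is more elementary, at the price of losing the intrinsic formulas for the $A_n$. Two points you only flag should be settled, and both go through. First, the branching issue is harmless in either regime: for $R>|y_0|$ the region $E_R$ is connected, the numerator of $Q$ vanishes at the fixed point $x_0$ of $\sigma$, and a $\sigma$-invariant holomorphic function descends through the two-to-one branched cover $\Delta$ to a function holomorphic off $w=y_0^2$ and bounded near it, hence holomorphic; for $R\le|y_0|$ the two ovals are interchanged by $\sigma$, the restriction of $\Delta$ to the oval containing $q_0$ is a biholomorphism onto $\{|w|<R^2\}$ (no branch point in the region), and a one-line computation with your two formulas shows the resulting identity also holds on the other oval. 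Second, when $R\le|y_0|$ the set $U$ does not meet the real axis, so it is not a slice domain and the identity principle of \cite{GSS} does not apply to it as stated; instead note that each summand of $(\ref{150})$ is a polynomial with right quaternionic coefficients and hence satisfies the representation formula, as does the locally uniform limit, and so does $f$ on the axially symmetric slice domain $\Omega\supseteq U$, so agreement on $U\cap\mathbb C_I$ forces agreement on all of $U$.
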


 As a consequence, for all $v\in\mathbb H$ with $|v|=1$ the directional derivative of $f$ along $v$ can be computed at $q_0$ as
$$\frac{\partial f}{\partial v}(q_0)=\lim\limits_{t\rightarrow 0}\frac{f(q_0+tv)-f(q_0)}{t}=vA_1+(q_0v-v\overline{q_0})A_2,$$
where $$A_1=R_{q_0}f(\overline{q_0})=\partial_s f(q_0), \qquad A_2=R_{\overline{q_0}}R_{q_0}f(q_0).$$ 
In particular, there holds that
$$f'(q_0)=R_{q_0}f(q_0)=A_1+2\,{\rm{Im}}(q_0)A_2.$$

\section{Proofs of Main Theorems}\label{Proofs of Main Theorems}
In this section, we shall give the proofs of Theorems  \ref{Julia}-\ref{Herzig} except that of Theorem 3, which will be given in the next section.

\begin{proof}[Proof of Theorem $\ref{Julia}$]
The Schwarz-Pick Theorem shows, for all $q\in\mathbb B$,
 $$\big|\big(1-f(q)\overline{f(q_n)}\big)^{-\ast}\ast\big(f(q)-f(q_n)\big)\big|\leq \big|\big(1-q\overline{q_n}\big)^{-\ast}\ast\big(q-q_n\big)\big|,$$
which together with Proposition $\ref{prop:Quotient Relation}$ implies that
$$\frac{\big|f\circ T_{1-f\overline{f(q_n)}}(q)-f(q_n)\big|}{\big|1-f\circ T_{1-f\overline{f(q_n)}}(q)\overline{f(q_n)}\big|} \leq
\frac{\big|T_{1-Id\overline{q_n}}(q)-q_n\big|}
{\big|1-T_{1-Id\overline{q_n}}(q)\overline{q_n}\big|}
.$$
We square both sides  and then  minus by one to yield
$$\frac{\Big|1-f\circ T_{1-f\overline{f(q_n)}}(q)\overline{f(q_n)}\Big|^2}
{1-\big|f\circ T_{1-f\overline{f(q_n)}}(q)\big|^2}
\leq\frac{\big|1-T_{1-Id\overline{q_n}}(q)\overline{q_n}\big|^2}{1-|q|^2}\,\,
\frac{1-\big|f(q_n)\big|^2}{1-|q_n|^2}.$$
Letting $n\rightarrow\infty$, we obtain that
\begin{equation}\label{100}
\frac{\big|1-f\circ T_{1-f\overline{\eta}}(q)\bar{\eta}\big|^2}
{1-\big|f\circ T_{1-f\overline{\eta}}(q)\big|^2}
\leq \alpha\, \frac{\big|1-T_{1-Id\bar{\xi}}(q)\bar{\xi}\big|^2}{1-|q|^2}.
\end{equation}
It implies that $\alpha>0$ and
$$\frac{1-\big|f\circ T_{1-f\overline{\eta}}(q)\big|^2}{\big|1-f\circ T_{1-f\overline{\eta}}(q)\bar{\eta}\big|^2}\geq \frac{1}{\alpha}\, \frac{1-|q|^2}{\big|1-T_{1-Id\bar{\xi}}(q)\bar{\xi}\big|^2}.$$
This is equivalent to the inequality
$${\rm{Re}}\Big(\big(1-f\circ T_{1-f\overline{\eta}}(q)\overline{\eta}\big)^{-1}
\big(1+f\circ T_{1-f\overline{\eta}}(q)\overline{\eta}\big)\Big)
\geq \frac{1}{\alpha}\,{\rm{Re}}\Big(\big(1-T_{1-Id\overline{\xi}}(q)\overline{\xi}\big)^{-1}
\big(1+T_{1-Id\overline{\xi}}(q)\overline{\xi}\big)\Big).$$
That is, in terms of the regular product,
$${\rm{Re}}\Big(\big(1-f(q)\bar{\eta}\big)^{-\ast}\ast\big(1+f(q)\bar{\eta}\big)\Big)
\geq \frac{1}{\alpha}\,{\rm{Re}}\Big((1-q\bar{\xi})^{-\ast}\ast(1+q\bar{\xi})\Big).$$

In particular, when $\xi=1$, inequality $(\ref{100})$ becomes
\begin{equation}\label{101}
\frac{\big|1-f\circ T_{1-f\overline{\eta}}(q)\bar{\eta}\big|^2}
{1-\big|f\circ T_{1-f\overline{\eta}}(q)\big|^2}
\leq \alpha\, \frac{\big|1-q\big|^2}{1-|q|^2}
\end{equation}
due to the fact that $$\big|1-T_{1-Id}(q)\big|=\big|1-q\big|.$$
By Proposition $\ref{prop:Quotient Relation}$, $T_{1-f\bar{\eta}}$ is a homeomorphism with inverse $T_{1-\eta\ast f^c}$ since $f(\mathbb B)\subseteq\mathbb B$. Replacing $q$ by $T_{1-\eta\ast f^c}(q)$ in  inequality (\ref{101}) gives that
$$\frac{\big|\eta-f(q)\big|^2}
{1-\big|f(q)\big|^2}
\leq \alpha\, \frac{\big|1-T_{1-\eta\ast f^c}(q)\big|^2}{1-|q|^2}
=\alpha\, \frac{|1-q|^2}{1-|q|^2}, \qquad \forall\,\,q\in \mathbb B.$$

%

If equality holds for  Julia's inequality $(\ref{eq:11})$ at some point $q_0\in\mathbb B$, then   the function
$$\big(1-f(q)\bar{\eta}\big)^{-\ast}\ast\big(1+f(q)\bar{\eta}\big)-
\frac{1}{\alpha}(1-q\bar{\xi})^{-\ast}\ast(1+q\bar{\xi})$$
is an imaginary constant, say $It_0$ in virtue of the maximum principle for real part of regular functions (see Lemma 2 in \cite{RW1}). A simple calculation shows that
$$f(q)=\bigg(1+\frac{1}{\alpha}(1-q\bar{\xi})^{-\ast}\ast(1+q\bar{\xi})+It_0\bigg)^{-\ast}
\ast\bigg(\frac{1}{\alpha}(1-q\bar{\xi})^{-\ast}\ast(1+q\bar{\xi})+It_0-1\bigg)\eta.$$
Notice that the term in the first brackets can be written as
$$\frac1{\alpha}(1-q\bar{\xi})^{-\ast}\ast\big(1+\alpha+I\alpha t_0\big)
\ast(1+q\bar{u}),$$ where $$u=\big((1-\alpha)+I\alpha t_0\big)\xi\big((1+\alpha)-I\alpha t_0\big)^{-1}\in\mathbb B,$$ since $\alpha>0$. The other term can be treated similarly. Consequently, $f$ can be represented as
$$f(q)=(1+q\bar{u})^{-*}*(q+u)v,$$
where $$v=\big((1+\alpha)+I\alpha t_0\big)^{-1}\bar{\xi}\big((1+\alpha)-I\alpha t_0\big)\eta\in \partial\mathbb B.$$  It follows that the equality in Julia's inequality can hold only for regular M\"{o}bius transformations of $\mathbb B$ onto $\mathbb B$, and a direct calculation shows that it does indeed hold for all such regular M\"{o}bius transformations.  Now the proof is complete.
\end{proof}

To prove Theorem \ref{Julia-Caratheodory}, we shall need a quaternionic version of Lindel\"{o}f's principle, which  follows  easily from the corresponding  result in the complex setting and the splitting lemma.
\begin{lem}\label{Lindelof}{\bf(Lindel\"{o}f)}
Let $f$ be a regular function on $\mathbb B$ and bounded in each non-tangential approach region at $1$. If for some continuous curve $\gamma\in \mathbb B\cap\mathbb C_I$ ending at $1$ for some $I\in \mathbb S$, there exists the limit $$\eta=\lim\limits_{t\rightarrow1^-}f(\gamma(t)),$$
then $f$ also has the non-tangential limit $\eta$ at $1$.
\end{lem}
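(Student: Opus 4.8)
The plan is to reduce the assertion to the classical complex Lindel\"{o}f principle by way of the Splitting Lemma, and then to propagate the resulting slice-wise non-tangential limit to the whole of $\mathbb{H}$ by means of the representation (structure) formula for slice regular functions. The point is that everything interesting happens inside the single slice $\mathbb{C}_I$ that contains $\gamma$; the passage to arbitrary directions is a purely quaternionic bookkeeping step.

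First I would fix the imaginary unit $I\in\mathbb{S}$ for which $\gamma\subset\mathbb{B}\cap\mathbb{C}_I$, choose $J\in\mathbb{S}$ with $J\perp I$, and invoke the Splitting Lemma to write $f_I(z)=F(z)+G(z)J$ with $F,G:\mathbb{B}_I\rightarrow\mathbb{C}_I$ holomorphic, where $\mathbb{B}_I:=\mathbb{B}\cap\mathbb{C}_I$ is the unit disc of $\mathbb{C}_I\cong\mathbb{C}$. Because the splitting $\mathbb{H}=\mathbb{C}_I\oplus\mathbb{C}_IJ$ is orthogonal, one has $|F(z)|,|G(z)|\leq|f_I(z)|$ pointwise, so $F$ and $G$ inherit from $f$ the boundedness in every non-tangential approach region at $1$, since a Stolz angle of $\mathbb{C}_I$ at $1$ is contained in the corresponding approach region of $\mathbb{H}$. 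Writing $\eta=\eta_1+\eta_2J$ with $\eta_1,\eta_2\in\mathbb{C}_I$, the hypothesis $f(\gamma(t))\rightarrow\eta$ yields $F(\gamma(t))\rightarrow\eta_1$ and $G(\gamma(t))\rightarrow\eta_2$ as $t\rightarrow1^-$. The classical Lindel\"{o}f principle in $\mathbb{C}_I\cong\mathbb{C}$ then applies to each of $F$ and $G$: being bounded in the Stolz angles at $1$ and admitting curvilinear limits along $\gamma$, they possess non-tangential limits $\eta_1,\eta_2$ at $1$, whence $f_I=F+GJ$ has non-tangential limit $\eta$ at $1$ \emph{within the slice} $\mathbb{C}_I$.

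The remaining, genuinely quaternionic, step upgrades this to a full non-tangential limit in $\mathbb{H}$. Since $\mathbb{B}$ is an axially symmetric slice domain, the representation formula gives, for $q=x+yK$ with $K\in\mathbb{S}$,
$$f(x+yK)=\tfrac{1}{2}(1-KI)f(x+yI)+\tfrac{1}{2}(1+KI)f(x-yI).$$
Given any sequence $q_m=x_m+y_mK_m\rightarrow1$ approaching $1$ non-tangentially in $\mathbb{H}$, the two slice points $x_m\pm y_mI$ have the same modulus and the same real part as $q_m$, so that $|1-(x_m\pm y_mI)|=|1-q_m|$ and $1-|x_m\pm y_mI|=1-|q_m|$; hence $x_m\pm y_mI\rightarrow1$ non-tangentially within $\mathbb{C}_I$, and by the previous step $f(x_m\pm y_mI)\rightarrow\eta$. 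Using that $\tfrac{1}{2}(1-K_mI)\eta+\tfrac{1}{2}(1+K_mI)\eta=\eta$ and that the coefficients satisfy $\tfrac{1}{2}|1\mp K_mI|\leq1$, I would subtract $\eta$ from the formula and estimate
$$|f(q_m)-\eta|\leq|f(x_m+y_mI)-\eta|+|f(x_m-y_mI)-\eta|\longrightarrow0,$$
which is exactly the desired non-tangential limit.

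The step I expect to be the main obstacle is this final propagation: the complex Lindel\"{o}f theorem controls $f$ only along the fixed slice $\mathbb{C}_I$, whereas the conclusion concerns the full four-dimensional approach cone at $1$. The two facts that make it go through are that a quaternion and its companion points on the same $2$-sphere share modulus and real part, so that non-tangentiality is preserved under the representation formula, and that the coefficients of that formula remain bounded; verifying these, together with the precise form of the complex Lindel\"{o}f principle that assumes only boundedness in Stolz angles, is where the actual work lies. By contrast, the splitting reduction in the first two paragraphs is routine.
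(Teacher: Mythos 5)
Your proof is correct, and it follows the route the paper merely gestures at: the paper's entire ``proof'' of this lemma is the remark that it ``follows easily from the corresponding result in the complex setting and the splitting lemma.'' The substantive difference is that you supply the propagation step from the single slice $\mathbb{C}_I$ to the full four-dimensional non-tangential approach region via the representation formula --- a step the paper leaves entirely implicit but which is genuinely needed, since the splitting lemma together with the complex Lindel\"{o}f principle only controls $f$ on the one slice containing $\gamma$. Your bookkeeping there is right: the companion points $x_m\pm y_mI$ share modulus and real part with $q_m$, so non-tangentiality is preserved slice-by-slice, and the coefficients $\tfrac12(1\mp K_mI)$ are bounded by $1$ in modulus, so the triangle-inequality estimate closes the argument. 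One caveat, which affects the paper's statement just as much as your proof: the version of the complex Lindel\"{o}f principle valid under boundedness only in Stolz angles is normally stated for a non-tangential (e.g.\ radial) curve; for an arbitrary, possibly tangential, continuous curve $\gamma$ one needs boundedness (or normality) on all of $\mathbb{D}$. Since the lemma is applied in the paper only with the radial curve $r\mapsto r$, nothing is lost, but it is worth flagging that the hypothesis on $\gamma$ as literally stated is broader than what the quoted complex result covers.
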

Now we are in a position to prove the Julia-Carath\'{e}odory theorem.
\begin{proof}[Proof of Theorem $\ref{Julia-Caratheodory}$]
The equivalence between  $(\textrm{ii})$ and $(\textrm{iii})$  follows directly from the corresponding  result in the complex setting (cf. \cite[p. 46, (VI-1)]{Sarason2}) and the splitting lemma.

 The implication $(\textrm{ii})\Rightarrow(\textrm{i})$ follows from the inequality
$$\frac{1-|f(r)|}{1-r}\leq \frac{|\eta-f(r)|}{1-r}, \qquad \forall\ r\in (0, 1).$$

Now we prove the implication $(\textrm{i})\Rightarrow (\textrm{ii})$. Under assumption $(\textrm{i})$, there exists a sequence $\{q_n\}_{n\in \mathbb N}\subset \mathbb B$ converging to $1$ as $n$ tends to $\infty$, such that
$$\alpha=\lim\limits_{n\rightarrow\infty}\frac{1-|f(q_n)|}{1-|q_n|}$$
and
$$\lim\limits_{n\rightarrow\infty}f(q_n)=\eta$$ for some $\eta\in \partial\mathbb B$. It follows from Julia's inequality (\ref{eq:10}) that
$$\frac{|\eta-f(q)|^2}{1-|f(q)|^2}\leq \alpha\,\frac{|1-q|^2}{1-|q|^2}.$$
Fix a non-tangential approach region at $1$, say the region
\begin{equation}\label{def:non-tangential-R}
\mathcal{R}(1,k)=\{q\in \mathbb B:|q-1|<k(1-|q|)\},
\end{equation}
where $k$ is a constant greater than one. For all $q\in\mathcal{R}(1,k)$ we have
\begin{eqnarray}\label{11}
\frac{|\eta-f(q)|^2}{1-|f(q)|^2}\leq \alpha k|q-1|\frac{(1-|q|)}{1-|q|^2}\leq \alpha k|q-1|,
\end{eqnarray}
which implies that $f(q)$ tends to $\eta$ as $q$ tends to $1$ within $\mathcal{R}(1,k)$.
In other words, $f$ has a non-tangential limit $\eta$ at $1$.

It remains to prove  that the difference quotient $$(q-1)^{-1}\big(f(q)-\eta\big)$$ has a non-tangential limit $\alpha\eta$. To this end, notice that
$$\frac{|\eta-f(q)|}{1+|f(q)|}\leq \frac{|\eta-f(q)|^2}{1-|f(q)|^2},$$
from which and inequality $(\ref{11})$ we have
$$\big|(q-1)^{-1}(f(q)-\eta)\big|\leq \alpha k\big(|1+|f(q)|\big)\leq 2\alpha k,$$
whenever $q\in\mathcal{R}(1,k)$. Consequently, the regular function $$g(q):=(q-1)^{-1}(f(q)-\eta)$$ is bounded in each non-tangential approach region at $1$.
The Lindel\"{o}f's principle in Lemma \ref{Lindelof} thus reduces the proof to the existence of the radial limit
$$\lim\limits_{r\rightarrow 1^-}\frac{\eta-f(r)}{1-r}=\alpha \eta.$$

To consider this radial limit, we observe from the definition of $\alpha$ as the lower limit in $(\textrm{i})$ that
\begin{equation}\label{12}
\liminf\limits_{r\rightarrow 1^-}\frac{1-|f(r)|}{1-r}\geq \alpha.
\end{equation}
On the other hand, setting $q=r$ in the Julia's inequality (\ref{eq:10}) yields that
\begin{equation*}\label{13}
\frac{1-|f(r)|}{1-r}\frac{1+r}{1+|f(r)|}\leq \frac{|\eta-f(r)|^2}{1-|f(r)|^2}\,\frac{1+r}{1-r}\leq \alpha \frac{(1-r)^2}{1-r^2}\,\frac{1+r}{1-r}.
\end{equation*}
Since $f$ has a non-tangential limit $\eta\in \partial \mathbb B$ at $1$, it follows that
\begin{equation}\label{14}
\limsup\limits_{r\rightarrow 1^-}\frac{1-|f(r)|}{1-r}\leq \alpha.
\end{equation}
Consequently,
\begin{equation}\label{15}
\lim\limits_{r\rightarrow 1^-}\frac{1-|f(r)|}{1-r}= \alpha.
\end{equation}

Furthermore,
$$\bigg(\frac{1-|f(r)|}{1-r}\bigg)^2\leq\bigg(\frac{|\eta-f(r)|}{1-r}\bigg)^2
\leq\alpha\frac{1-|f(r)|^2}{1-r^2}=\alpha\frac{1-|f(r)|}{1-r}\,\frac{1+|f(r)|}{1+r},$$
which together with $(\ref{15})$ implies that
\begin{equation}\label{16}
\lim\limits_{r\rightarrow 1^-}\frac{|\eta-f(r)|}{1-r}= \alpha.
\end{equation}
By $(\ref{15})$ and $(\ref{16})$, we have
\begin{equation}\label{17}
\lim\limits_{r\rightarrow 1^-}\frac{1-|f(r)|}{|\eta-f(r)|}=1.
\end{equation}
Since
$$\frac{1-|f(r)|}{|\eta-f(r)|}\leq \frac{1-{\rm{Re}}\big(f(r)\bar{\eta}\big)}{|1-f(r)\bar{\eta}|}\leq1,$$
it follows from  that
$$\lim\limits_{r\rightarrow 1^-}{\rm{Re}} \frac{1-f(r)\bar{\eta}}{|1-f(r)\bar{\eta}|}=1.$$
This forces that
$$\lim\limits_{r\rightarrow 1^-}\frac{1-f(r)\bar{\eta}}{|1-f(r)\bar{\eta}|}=1.$$
Therefore,
\begin{equation}\label{18}
\begin{split}
\lim\limits_{r\rightarrow 1^-}\frac{\eta-f(r)}{1-r}
&=\lim\limits_{r\rightarrow 1^-}\frac{1-f(r)\bar{\eta}}{1-r}\eta\\
&=\lim\limits_{r\rightarrow 1^-}\frac{1-f(r)\bar{\eta}}{|1-f(r)\bar{\eta}|}\,\frac{1-|f(r)|}{1-r}\,
\frac{|\eta-f(r)|}{1-|f(r)|}\eta\\
&=\alpha \eta,
\end{split}
\end{equation}
which completes the proof of the implication $(\textrm{i})\Rightarrow (\textrm{ii})$.

Now we assume that all conditions (i)-(iii) hold.
By carefully checking the above proof, we see that assertions (a)-(c) hold true.
It remains to verify (d).

From assertions (i), (ii), and (c), we know that $\alpha<\infty$ and  the difference quotient $$(q-1)^{-1}\big(f(q)-\eta\big)$$ has a non-tangential limit $\alpha\eta$ at point $1$. Let us set
$$g(q)=(q-1)^{-1}\big(f(q)\bar\eta-1\big)-\alpha.$$
Then $g$ is regular on $\mathbb B$ and has the non-tangential limit $0$ at  point $1$. Since
$$|f(q)|^2=\big|1+(q-1)(\alpha+g(q))\big|^2,$$
it follows that
\begin{equation}\label{19}
\frac{1-|f(q)|^2}{1-|q|^2}=2\alpha \frac{{\rm{Re}}(1-q)}{1-|q|^2}+2\frac{{\rm{Re}}\big((1-q)g(g)\big)}{1-|q|^2}
-\frac{|1-q|^2}{1-|q|^2}\big|\alpha+g(q)\big|^2.
\end{equation}
Fix the non-tangential approach region $\mathcal{R}(1,k)$ as in (\ref{def:non-tangential-R}) and consider the non-tangential limit as
 $q\rightarrow 1$ within $\mathcal{R}(1,k)$.
  It is clear that  the second term on the right-hand side of the preceding equality approaches $0$ and so does the third term. Since the equality
$$\frac{{\rm{Re}}(1-q)}{1-|q|^2}=\frac12\bigg(1+\frac{|1-q|^2}{1-|q|^2}\bigg)$$
shows that $$\mathop{\lim_{q\rightarrow 1}}_{q\in \mathcal{R}(1,k)}\frac{{\rm{Re}}(1-q)}{1-|q|^2}=\frac12,$$
from which it follows that the first term on the right-hand side of  equality $(\ref{19})$ approaches to $\alpha$ as $q\rightarrow 1$ within $\mathcal{R}(1,k)$. Therefore,
$$\mathop{\lim_{q\rightarrow 1}}_{q\in \mathcal{R}(1,k)}\frac{1-|f(q)|^2}{1-|q|^2}=\alpha,$$
which is equivalent to
$$\mathop{\lim_{q\rightarrow 1}}_{q\in \mathcal{R}(1,k)}\frac{1-|f(q)|}{1-|q|}=\alpha.$$
Now the proof is complete.
\end{proof}

Next we come to prove the quaternionic version of Hopf's theorem.

\begin{proof}[Proof of Theorem $\ref{Herzig}$]

 $(\textrm{i})$\ Let $f$ be as described in Theorem \ref{Herzig}. Set
$$g(q):=\big(1-f(q)\overline{f(0)}\big)^{-\ast}\ast\big(f(q)-f(0)\big)
(1-\overline{f(0)})(1-f(0))^{-1},$$ which is a regular function on $\mathbb B\cup\{1\}$ such that $g(\mathbb B)\subseteq\mathbb B$, $g(0)=0$ and $g(1)=1$. Moreover, an easy calculation shows that
\begin{equation}\label{ineq:i1}
f'(1)=\frac{|1-f(0)|^2}{1-|f(0)|^2}\,g'(1),\qquad |g'(0)|=\frac{|f'(0)|}{1-|f(0)|^2}\leq 1.
\end{equation}
Applying  Julia-Carath\'{e}odory Theorem and Julia's inequality (\ref{eq:10}) to the regular function $h(q):=q^{-1}g(q)$ mapping  $\mathbb B$  to $\overline{\mathbb B}$ yields that
$$  g'(1)=1+h'(1)\geq 1+\frac{|1-g'(0)|^2}{1-|g'(0)|^2}. $$
In particular,
\begin{equation}\label{ineq:i2}
  g'(1)\geq\frac{2}{1+|g'(0)|},
\end{equation}
which is also established easily from the second inequality in (\ref{23}). Substituting equalities in (\ref{ineq:i1}) to (\ref{ineq:i2}) yields that
$$f'(1)\geq\frac{2|1-f(0)|^2}{1-|f(0)|^2+|f'(0)|}.$$
In particular,
\begin{equation}\label{ineq:i3}
f'(1) \geq\frac{|1-f(0)|^2}{1-|f(0)|^2}.
\end{equation}
If equality holds for the last inequality, then equality holds for Julia's equality (\ref{eq:10}) at point $q=0$, it follows from Theorem \ref{Julia} and the assumption that $f(1)=1$ that
 \begin{equation}\label{ineq:i4}
 f(q)=(1-q\bar{u})^{-\ast}\ast(q-u)(1-\bar{u})(1-u)^{-1}
\end{equation}
for some point $u\in\mathbb B$. Therefore, the equality in inequality (\ref{ineq:i3}) can hold only for regular M\"{o}bius transformations of  the form (\ref{ineq:i4}), and a direct calculation shows that it does indeed hold for all such regular M\"{o}bius transformations.
Now the proof of $\textrm{(i)}$ is complete.

$\textrm{(ii)}$  The result follows easily from $\textrm{(i)}$ by considering  the regular function $h(q):=q^{-n}f(q)$ and noticing  that
 $$h(0)=\frac{f^{(n)}(0)}{n!},\qquad h'(0)=\frac{f^{(n+1)}(0)}{(n+1)!}.$$

\end{proof}

%
%

To prove the boundary Schwarz lemma (Theorem \ref{Generalized Herzig}), we make the best use of the classical Hopf's lemma, which can be viewed as the real version of the  boundary Schwarz lemma.
We remark that, unlike in the complex setting,  the boundary Schwarz lemma  can not be simplified to the specific case that  $\xi=1$ because the theory of  regular composition is unavailable.

Before presenting the proof of  Theorem \ref{Generalized Herzig}, we need the following Lindel\"{o}f inequality.

\begin{prop}\label{L-inequality}{\bf(Lindel\"{o}f)}
Let $f$ be a  regular self-mapping of the open unit ball $\mathbb B$. Then for all  $q\in\mathbb B$ we have

\begin{equation}\label{20}
\Big|f(q)-\frac{1-|q|^2}{1-|q|^2|f(0)|^2}f(0)\Big|
\leq\frac{|q|\big(1-|f(0)|^2\big)}{1-|q|^2|f(0)|^2};
\end{equation}

\begin{equation}\label{21}
\frac{|f(0)|-|q|}{1-|q||f(0)|}\leq|f(q)|\leq \frac{|q|+|f(0)|}{1+|q||f(0)|};
\end{equation}
\begin{equation}\label{22}
\big|f(q)-f(0)\big|\leq \frac{|q|\big(1-|f(0)|^2\big)}{1-|q||f(0)|}.
\end{equation}

Moreover, if there exists a positive integer $n$ such that
$$f^{(k)}(0)=0, \qquad k=0,1,\ldots,n-1,$$
 then it holds that
\begin{equation}\label{23}
\frac{|f^{(n)}(0)|-n!|q|}{n!-|q||f^{(n)}(0)|}|q|^n\leq|f(q)|\leq \frac{n!|q|+|f^{(n)}(0)|}{n!+|q||f^{(n)}(0)|}|q|^n,\qquad \forall\,q\in\mathbb B.
\end{equation}

Equality holds for one of inequalities in $(\ref{20})-(\ref{22})$ at some point
$q_0\in \mathbb B\setminus\{0\}$ if and only if $f$ is a regular M\"{o}bius transformation of $\mathbb B$ onto $\mathbb B$, and equality holds for one of inequalities in $(\ref{23}) $ at some point $q_0\in \mathbb B\setminus\{0\}$  if and only if $f(q)=q^ng(q)$, where $g$ is a regular M\"{o}bius transformation of $\mathbb B$ onto $\mathbb B$ or a unimodular constant  $u\in\partial\mathbb B$.
\end{prop}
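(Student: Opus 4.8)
The plan is to reduce everything to a single pointwise inequality derived from the Schwarz--Pick theorem, and then to extract (20)--(23) by elementary algebra. First I would invoke the Schwarz--Pick inequality already used in the proof of Theorem \ref{Julia}, specialized to the constant sequence $q_n\equiv 0$, which gives
$$\big|(1-f(q)\overline{f(0)})^{-\ast}\ast(f(q)-f(0))\big|\le |q|,\qquad q\in\mathbb B.$$
To turn this $\ast$-product estimate into a genuine pointwise one, set $F:=1-f\overline{f(0)}$. Since $|f(q)\overline{f(0)}|<1$ on $\mathbb B$, the function $F$ never vanishes, and the same holds for its symmetrization $F^s$ on $\mathbb B$; hence by Proposition \ref{prop:Quotient Relation} the map $T_F$ is a diffeomorphism of $\mathbb B$. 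As $T_F(q)=F^c(q)^{-1}qF^c(q)$ is a conjugation it preserves the modulus, so evaluating the identity of Proposition \ref{prop:Quotient Relation} and letting $w=T_F(q)$ range over all of $\mathbb B$ (with $|w|=|q|$) yields the clean pointwise bound
$$\frac{|f(q)-f(0)|}{|1-f(q)\overline{f(0)}|}\le |q|,\qquad q\in\mathbb B. \qquad (\star)$$

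From $(\star)$ the inequalities (20)--(22) are purely algebraic. Writing $a=f(0)$, $z=f(q)$ and $t=|q|^2$, I would square $(\star)$, expand $|1-z\bar a|^2=1-2\,\mathrm{Re}(z\bar a)+|z|^2|a|^2$, and rearrange into
$$|z|^2-2\,\mathrm{Re}\!\Big(z\,\frac{(1-t)\bar a}{1-t|a|^2}\Big)+\frac{|a|^2-t}{1-t|a|^2}\le 0,$$
which says precisely that $f(q)$ lies in the closed ball with center $c:=\frac{1-t}{1-t|a|^2}\,a$ and radius $r:=\frac{|q|(1-|a|^2)}{1-t|a|^2}$; this is (20). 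Inequality (21) then follows from $|c|-r\le |f(q)|\le |c|+r$, and (22) from $|f(q)-f(0)|\le r+|c-a|$, each collapsing after factorization to the stated closed form.

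For (23) I would pass to $h(q):=q^{-n}f(q)$, which is regular because $f$ vanishes to order $n$ at the origin, and which maps $\mathbb B$ into $\overline{\mathbb B}$ by the Schwarz lemma for regular functions (so that $|f(q)|=|q|^n|h(q)|$). Applying the already-proved inequality (21) to $h$ and multiplying through by $|q|^n$, together with $h(0)=f^{(n)}(0)/n!$, gives (23) after clearing denominators. The equality discussion is then handled by tracing equalities backward: equality in any of (20)--(22) at a point $q_0\ne 0$ forces equality in $(\star)$, hence in Schwarz--Pick, which by the equality analysis in the proof of Theorem \ref{Julia} characterizes the regular M\"obius transformations of $\mathbb B$; for (23) the same reasoning applied to $h$ forces $h$ to be such a transformation or a unimodular constant.

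The step I expect to be the main obstacle is the passage from the $\ast$-product Schwarz--Pick inequality to the pointwise inequality $(\star)$: one must verify that $F^s$ is zero-free on $\mathbb B$ (so that $T_F$ is globally defined and bijective there) and that the conjugation $T_F$ is modulus-preserving and sweeps out all of $\mathbb B$, since only then does the estimate at $T_F(q)$ upgrade to an estimate valid at every point of $\mathbb B$. Everything after $(\star)$ is routine, though the factorizations leading to the sharp constants in (21)--(23) require some care.
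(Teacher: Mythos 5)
Your proposal is correct and follows essentially the same route as the paper: Schwarz--Pick at the origin, the passage through Proposition \ref{prop:Quotient Relation} using that $T_{1-f\overline{f(0)}}$ is a modulus-preserving bijection of $\mathbb B$ (which the paper likewise justifies from $f(\mathbb B)\subseteq\mathbb B$), the algebraic reduction of the resulting pointwise bound to the ball inequality (\ref{20}), the triangle inequality for (\ref{21})--(\ref{22}), the substitution $h(q)=q^{-n}f(q)$ for (\ref{23}), and the equality analysis traced back to the rigidity case of Schwarz--Pick. The step you flag as the main obstacle is exactly the one the paper handles via Proposition \ref{prop:Quotient Relation}, and your verification of it is sound.
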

\begin{proof}
The Schwarz-Pick Theorem gives, for all $q\in\mathbb B$,
\begin{equation}\label{30}
\big|\big(1-f(q)\overline{f(0)}\big)^{-\ast}\ast\big(f(q)-f(0)\big)\big|\leq |q|,
\end{equation}
which together with Proposition $\ref{prop:Quotient Relation}$ implies that
$$\frac{\big|f\circ T_{1-f\overline{f(0)}}(q)-f(0)\big|}{\big|1-f\circ T_{1-f\overline{f(0)}}(q)\overline{f(0)}\big|} \leq |q|.$$
An easy calculation shows that the preceding inequality is equivalent to
$$\Big|f\circ T_{1-f\overline{f(0)}}(q)-\frac{1-|q|^2}{1-|q|^2|f(0)|^2}f(0)\Big|
\leq\frac{|q|\big(1-|f(0)|^2\big)}{1-|q|^2|f(0)|^2}.$$
Since $f(\mathbb B)\subseteq\mathbb B$, it follows from  Proposition $\ref{prop:Quotient Relation}$ that $T_{1-f\overline{f(0)}}$ is a homeomorphism with inverse $T_{1-f(0)\ast f^c}$.  Replacing $q$ by $T_{1-f(0)\ast f^c}(q)$ in the preceding inequality gives that
$$\Big|f(q)-\frac{1-|q|^2}{1-|q|^2|f(0)|^2}f(0)\Big|
\leq\frac{|q|\big(1-|f(0)|^2\big)}{1-|q|^2|f(0)|^2},\qquad \forall\,q\in\mathbb B.$$

If equality achieves at some point $0\neq q_0\in \mathbb B$ in the last inequality, then it also achieves at point $0\neq\widetilde{q_0}=T_{1-f(0)\ast f^c}(q_0)$  in inequality $(\ref{30})$. It thus follows from the Schwarz-Pick theorem that $f$ is a regular M\"{o}bius transformation of $\mathbb B$ onto $\mathbb B$.

Inequalities in $(\ref{21})$ and $(\ref{22})$ follow from $(\ref{20})$ as well as  the triangle inequality. Moreover, if $f^{(k)}(0)=0$, $k=0,1,\ldots,n-1$ for some positive integer $n$, then the function $g(q):=q^{-n}f(q)$ is also a regular function from $\mathbb B$ to $\overline{\mathbb B}$  in virtue of the maximum principle, and in addition we  have  $g(0)=f^{(n)}(0)/n!$.  Therefore, inequalities in $(\ref{23})$ follow from $(\ref{21})$. The proof is complete.
\end{proof}

We now come to  prove the boundary Schwarz lemma.

\begin{proof}[Proof of Theorem $\ref{Generalized Herzig}$]
By assumption, the slice subharmonic function $|f|^2$ attains its maximum at the boundary point $\xi\in \partial\mathbb B$ so that the directional derivative of $|f|^2$ along $\xi$ at the point $\xi$ satisfies that
\begin{equation}\label{209}
\frac{\partial |f|^2}{\partial \xi}(\xi)>0,
\end{equation}
 in virtue of the classical  Hopf lemma for subharmonic functions. Moreover,
\begin{equation}\label{210}
\frac{\partial |f|^2}{\partial \tau}(\xi)=0, \qquad \forall\, \,\tau\in T_{\xi}(\partial \mathbb B)\cong \mathbb R^3.
\end{equation}
Indeed, for any unit tangent vector $\tau\in T_{\xi}(\partial \mathbb B)$, take a smooth curve
$\gamma:(-1,1)\rightarrow\overline{\mathbb B}$ such that
$$\gamma(0)=\xi,\quad \gamma'(0)=\tau.$$
By definition we have
$$\frac{\partial |f|^2}{\partial \tau}(\xi)=\left.\Big(\frac{d}{dt}\big|f(\gamma(t))\big|^2\Big)\right|_{t=0}=0,$$
since the function $|f(\gamma(t))\big|^2$ in $t$ attains its maximum at the point $t=0$.

Moreover,   Theorem \ref{direction derivative} shows that
$$\frac{\partial f}{\partial v}(\xi)=v\partial_sf(\xi)+(\xi v-v\overline{\xi})R_{\bar{\xi}}R_{\xi}f(\xi)$$
 for all $v\in \mathbb H$ with $|v|=1$.
Therefore,
\begin{equation}\label{211}
\begin{split}
\frac{\partial |f|^2}{\partial v}(\xi)
&=2\,\Big\langle\frac{\partial f}{\partial v}(\xi),f(\xi)\Big\rangle\\
&=2\,\Big\langle v\partial_sf(\xi)+(\xi v-v\overline{\xi})R_{\bar{\xi}}R_{\xi}f(\xi),f(\xi)\Big\rangle\\
&=2\,\Big\langle v,f(\xi)\overline{\partial_sf(\xi)}+\bar{\xi}f(\xi)\overline{R_{\bar{\xi}}R_{\xi}f(\xi)}-
f(\xi)\overline{R_{\bar{\xi}}R_{\xi}f(\xi)}\xi\Big\rangle\\
&=2\,\Big\langle v,f(\xi)\overline{f'(\xi)}+\big[\bar{\xi}, f(\xi)\overline{R_{\bar{\xi}}R_{\xi}f(\xi)}\,\big]\Big\rangle,
\end{split}
\end{equation}
where $\langle$ , $\rangle$ denotes  the standard inner product on $\mathbb H\cong\mathbb R^4$, i.e.,$$\langle p,q\rangle=\textrm{Re}(p\bar{q}).$$
In the last equality we have used the fact that $$f'(\xi)=\partial_sf(\xi)+2\,{\rm{Im}}(\xi)R_{\bar{\xi}}R_{\xi}f(\xi).$$
Now it follows from (\ref{210}) and (\ref{211}) that
$$
f(\xi)\overline{f'(\xi)}+\big[\bar{\xi}, f(\xi)\overline{R_{\bar{\xi}}R_{\xi}f(\xi)}\,\big]\perp
 T_{\xi}(\partial \mathbb B)$$
so that in view of (\ref{209}) and (\ref{211})  there exists a real number $\lambda>0$ such that
$$f(\xi)\overline{f'(\xi)}+\big[\bar{\xi}, f(\xi)\overline{R_{\bar{\xi}}R_{\xi}f(\xi)}\,\big]=\lambda\,\xi$$
and
\begin{equation}\label{15110}
\overline{\xi}\Big(f(\xi)\overline{f'(\xi)}+\big[\bar{\xi}, f(\xi)\overline{R_{\bar{\xi}}R_{\xi}f(\xi)}\,\big]\Big)
=\lambda
=\frac12\frac{\partial |f|^2}{\partial \xi}(\xi)=\frac{\partial |f|}{\partial \xi}(\xi)>0.
\end{equation}

In order to get a better lower bound for the directional derivative $\dfrac{\partial |f|}{\partial \xi}(\xi)$, we make the best use of the so-called Lindel\"{o}f's inequality. Set
\begin{equation}\label{15111}
g(q):=\big(1-f(q)\overline{f(0)}\big)^{-\ast}\ast\big(f(0)-f(q)\big),
\end{equation}
which is a regular function on $\mathbb B\cup[\xi]$ such that $g(\mathbb B)\subseteq\mathbb B$ and $g(\xi)\in \partial\mathbb B$. Moreover, it is evident that $g(0)=0$ and
\begin{equation}\label{15112}
g'(0)=-\frac{f'(0)}{1-|f(0)|^2}.
\end{equation}
It follows from the second inequality in (\ref{23}) that
\begin{equation}\label{15113}
|g(q)|\leq \frac{|q|+|g'(0)|}{1+|q||g'(0)|}|q|,\qquad \forall\,q\in\mathbb B.
\end{equation}
From (\ref{15111}), we obtain that
$$f(q)=\big(1-g(q)\overline{f(0)}\big)^{-\ast}\ast\big(f(0)-g(q)\big).$$
This together with Proposition \ref{prop:Quotient Relation} implies
$$f(q)=\Big(1-g\circ T_{1-g\overline{f(0)}}(q)\overline{f(0)}\Big)^{-1}\Big(f(0)-g\circ T_{1-g\overline{f(0)}}(q)\Big),$$
from which it follows as in the complex setting that
\begin{equation}\label{15114}
 |f(q)|\leq \frac{|f(0)|+\big|g\circ T_{1-g\overline{f(0)}}(q)\big|}
{1+|f(0)|\big|g\circ T_{1-g\overline{f(0)}}(q)\big|},\qquad \forall\,q\in\mathbb B.
\end{equation}
Observe that the function $$\varphi_t(x)=\frac{x+t}{1+tx}$$
is increasing in $x\in [0,1]$ for any given $t\in [0,1]$, and $|T_{1-g\overline{f(0)}}(q)|=|q|$ for all $q\in\mathbb B$. Consequently, from inequalities (\ref{15113}) and (\ref{15114}) we conclude that
\begin{equation}\label{15115}
|f(q)|\leq \frac{|f(0)|+\mu_g(q)}
{1+|f(0)|\mu_g(q)},
\end{equation}
where $$\mu_g(q)=\frac{|q|+|g'(0)|}{1+|q||g'(0)|}|q|$$ for all $q\in\mathbb B$.
This together with an easy  calculation gives
$$\frac{1-|f(q)|}{1-|q|}\geq
\frac{\big(1-|f(0)|\big)\big(1+|q|\big)}{\big(1+|q||g'(0)|\big)\big(1+|f(0)|\mu_g(q)\big)},$$
which results in
$$\frac{\partial |f|}{\partial \xi}(\xi)
=\lim\limits_{r\rightarrow 1^{-}}\frac{1-|f(r\xi)|}{1-r}
\geq\frac{2\big(1-|f(0)|\big)}{\big(1+|g'(0)|\big)\big(1+|f(0)|\big)}
=\frac{2\big(1-|f(0)|\big)^2}{1-|f(0)|^2+|f'(0)|}.$$
Here the last equality follows from equality  (\ref{15112}). Now the proof of $\textrm{(i)}$ is complete and it remains to prove $\textrm{(ii)}$.

However, $\textrm{(ii)}$ follows easily from $\textrm{(i)}$ by considering  the regular function $h(q):=q^{-n}f(q)$ and noticing  that
 $$h(0)=\frac{f^{(n)}(0)}{n!},\qquad h'(0)=\frac{f^{(n+1)}(0)}{(n+1)!}.$$
Moreover,
$$f(\xi)\overline{f'(\xi)}+\big[\bar{\xi}, f(\xi)\overline{R_{\bar{\xi}}R_{\xi}f(\xi)}\,\big]
=n\xi+h(\xi)\overline{h'(\xi)}+\big[\bar{\xi}, h(\xi)\overline{R_{\bar{\xi}}R_{\xi}h(\xi)}\,\big]$$
as one easily verifies. Now the proof is complete.
\end{proof}

Some useful remarks are in order.
\begin{rem}\label{rem-Schwarz1} It is quite natural to ask if the quality
$$\overline{\xi}\Big(f(\xi)\overline{f'(\xi)}+\big[\bar{\xi}, f(\xi)\overline{R_{\bar{\xi}}R_{\xi}f(\xi)}\,\big]\Big)$$
in Theorem   $\ref{Generalized Herzig}$ is no other than
$$\overline{\xi}f(\xi)\overline{f'(\xi)}$$
as in the complex setting.
Unfortunately, the Lie bracket
$$\big[\bar{\xi}, f(\xi)\overline{R_{\bar{\xi}}R_{\xi}f(\xi)}\,\big]$$
in Theorem $\ref{Generalized Herzig}$ does not necessarily vanish in general. Moreover,
all the products of
$\xi$, $f(\xi)$ and $\overline{f'(\xi)}$ in any different orders  may fail simultaneously to be  real numbers so that the inequality
$$\overline{\xi} f(\xi)\overline{f'(\xi)}\geq \frac{2\big(1-|f(0)|\big)^2}{1-|f(0)|^2+|f'(0)|}$$
does not hold, neither all of its modified versions free of orders. These facts can be
demonstrated by  the following counterexample.
\end{rem}

\begin{ex}\label{ex-Schwarz}
Let $I\in\mathbb S$ be fixed. Set
$$f(q)=\big(1+qI/2\big)^{-\ast}\ast(q-I/2).$$
Then it is a slice regular M\"{o}bius transformation of $\mathbb B$ onto $\mathbb B$.
 It is evident to see that
 $$f(q)=\big(q^2+4\big)^{-1}\big(3q-2(q^2+1)I\big)$$
so  that it satisfies all the assumptions of Theorem $\ref{Generalized Herzig}$.

Now we set $\xi=J$, where $J\in\mathbb S$ is fixed  such that $J\perp I$.   An easy calculation thus shows that
$$f(J)=J,\qquad f'(J)=\frac13(5+4IJ),$$
and
$$A_1=\partial_sf(J)=1,\qquad A_2=R_{-J}R_{J}f(J)=-\frac13(2I+J).$$
Therefore,
$$\big[\,\overline{J},f(J)\overline{R_{-J}R_{J}f(J)}\,\big]
=-\frac13\big[J,J(2I+J)\big]=\frac43I\neq0.$$

On the other hand,
$$\overline{J}\Big(f(J)\overline{f'(J)}+
\big[\,\overline{J},f(J)\overline{R_{-J}R_{J}f(J)}\,\big]\Big)
=\frac53>\frac{2\big(1-|f(0)|\big)^2}{1-|f(0)|^2+|f'(0)|}=\frac13$$
as predicated by Theorem $\ref{Generalized Herzig}$.
\end{ex}

%

\bigskip

Now we provide an example to show that in Corollary \ref{Cor-Schwarz} the inequality $f'(\xi)>1$  may fail, or rather that
$f'(\xi)$ may not be a real number.
\begin{ex}\label{ex-Schwarz1} We now construct a function $g$ such that $g'(J)$ is no longer a real number. To this end, we set
$$g(q)=-qf(q)J=-q\big(1+qI/2\big)^{-\ast}\ast(q-I/2)J,$$
where  $f$ is as described in Example \ref{ex-Schwarz} and $I, J\in\mathbb S$ with $J\perp I$.

It is evident that this function is a  Blaschke product of order 2 so that
it is regular on $\overline{\mathbb B}$,
 and satisfies  $g(\mathbb B)\subseteq\mathbb B$, $g(0)=0$ and $g(J)=-Jf(J)J=J$.
This means that $g$ satisfies all assumptions given in Corollary \ref{Cor-Schwarz}.

  However, we find that $g'(J)$ is indeed not a real number.
  In fact, by  the Leibniz rule we have
$$g'(q)=-\big(f(q)+qf'(q)\big)J.$$
Consequently,
$$g'(J)=\frac43(2-IJ)\notin \mathbb R.$$

On the other hand,  a simple calculation shows that
$$\big[J,R_{-J}R_{J}g(J)\,\big]=-\frac43IJ$$
and
$$g'(J)-\big[J, R_{-J}R_{J}g(J)\,\big]=\frac83>1$$
as predicated by Corollary \ref{Cor-Schwarz}.
\end{ex}

\begin{rem}\label{rem-Schwarz2}
Theorem \ref{Generalized Herzig} also provides a lower bound for the slice derivative of $f$ at $\xi$ in modulus. Notice that
$$\frac{\partial f}{\partial \xi}(\xi)=\xi f'(\xi)$$ and
$$\frac{\partial |f|}{\partial \xi}(\xi)
=\overline{\xi}\Big(f(\xi)\overline{f'(\xi)}+\big[\bar{\xi}, f(\xi)\overline{R_{\bar{\xi}}R_{\xi}f(\xi)}\,\big]\Big)$$
from (\ref{15110}), thus the obvious inequality
$$\Big|\frac{\partial f}{\partial \xi}(\xi)\Big|\geq \frac{\partial |f|}{\partial \xi}(\xi)$$ shows that the left-hand side of each inequality in Theorem \ref{Generalized Herzig} can be replaced by $|f'(\xi)|$.
\end{rem}

\begin{rem}\label{rem-Schwarz3}
The proof of Theorem \ref{Generalized Herzig} contains implicitly the inequality
$$|f(q)|\leq \frac{|f(0)|+|q|\dfrac{|q|\big(1-|f(0)|^2\big)+|f'(0)|}{1-|f(0)|^2+|q||f'(0)|}}
{1+|f(0)||q|\dfrac{|q|\big(1-|f(0)|^2\big)+|f'(0)|}{1-|f(0)|^2+|q||f'(0)|}},\qquad \forall\, q\in \mathbb B,$$
for any regular function $f$ mapping $\mathbb B$ into $\overline{\mathbb B}$. Obviously, this inequality is more precise than that one in (\ref{21}).
\end{rem}

\section{Julia-Carath\'{e}odory theorem in  $\mathbb H^{+}$}
\label{Julia-Caratheodory theorem}
In this section, we establish the Julia-Carath\'{e}odory theorem for slice regular self-mappings of the right half-space
$$\mathbb H^{+}:=\big\{q\in \mathbb H:{\rm{Re}}(q)>0\big\}.$$
 The proof depends ultimately on the right half-space version of  the Schwarz-Pick theorem, whose proof is the same as the one in \cite{BS} and hence is omitted.
\begin{thm}\label{Schwarz-Pick}{\bf(Schwarz-Pick)}
Let $f:\mathbb H^+\rightarrow\mathbb H^+$ be a regular function. Then for any $q_0\in \mathbb H^+$ we have
$$\Big|\big(g(q)+\overline{g(q_0)}\big)^{-\ast}\ast\big(g(q)-g(q_0)\big)\Big|
\leq\big|(q+\overline{q_0})^{-\ast}\ast(q-q_0)\big|, \qquad \forall\,q\in\mathbb H^+.$$
Inequality is strict $($except at $q=q_0$$)$ unless $f$ is a regular M\"{o}bius transformation from $\mathbb H^{+}$ onto itself.
\end{thm}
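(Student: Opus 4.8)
The statement is the right half-space counterpart of the ball Schwarz--Pick theorem of \cite{BS}, and the plan is to reproduce that argument with $\mathbb B$ replaced by $\mathbb H^{+}$; crucially, the proof is \emph{composition-free}, which is what makes it available here despite the failure of regular composition stressed before Theorem \ref{Generalized Herzig}. Writing $c:=f(q_{0})$ and $a:=q_{0}$, I would introduce the two regular functions
\[
F(q):=\big(f(q)+\overline{c}\big)^{-\ast}\ast\big(f(q)-c\big),\qquad G(q):=\big(q+\overline{a}\big)^{-\ast}\ast\big(q-a\big),
\]
so that the asserted inequality is exactly $|F(q)|\leq|G(q)|$ on $\mathbb H^{+}$.

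The first step is to check that $F$ and $G$ are regular maps of $\mathbb H^{+}$ into $\mathbb B$. By Proposition \ref{prop:Quotient Relation} the value $F(q)$ equals the ordinary pointwise expression $\big(f(\widehat q)+\overline{c}\big)^{-1}\big(f(\widehat q)-c\big)$ at a point $\widehat q$ lying on the same $2$-sphere as $q$, and likewise for $G$; since $f(\widehat q),c\in\mathbb H^{+}$, the claim reduces to the elementary pointwise identity
\[
\big|\,b+\overline{d}\,\big|^{2}-\big|\,b-d\,\big|^{2}=4\,\mathrm{Re}(b)\,\mathrm{Re}(d)>0,\qquad b,d\in\mathbb H^{+},
\]
which is the half-space analogue of the ball identity $|1-b\overline{d}|^{2}-|b-d|^{2}=(1-|b|^{2})(1-|d|^{2})$. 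Thus $|F|,|G|<1$ on $\mathbb H^{+}$. Moreover $G$ is a regular M\"obius transformation carrying $\mathbb H^{+}$ onto $\mathbb B$, so $|G(q)|\to 1$ as $q$ approaches $\partial\mathbb H^{+}$, while both $F$ and $G$ vanish on the $2$-sphere $[q_{0}]$.

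The comparison $|F|\leq|G|$ is then obtained, as in \cite{BS}, by a Schwarz-type maximum principle: I would factor $F=G\ast\Psi$ with $\Psi:=G^{-\ast}\ast F$ and prove $|\Psi|\leq1$. Granting the regularity of $\Psi$, this is immediate from the maximum modulus principle for the slice subharmonic function $|\Psi|$ together with the boundary estimate $\limsup_{q\to\partial\mathbb H^{+}}|\Psi(q)|\leq1$, which follows from $|G|\to1$ and $|F|\leq1$ (with a Phragm\'en--Lindel\"of control at $\infty$); then $|F(q)|=|G\ast\Psi(q)|=|G(q)|\,|\Psi(q')|\leq|G(q)|$ by Proposition \ref{prop:RP}. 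I expect the main obstacle to be precisely the regularity of the quotient $\Psi$: because the symmetrization $G^{s}$ has a \emph{spherical} zero along $[q_{0}]$ while $F$ a priori vanishes only at an isolated point of $[q_{0}]$, one must verify that the numerator $G^{c}\ast F$ absorbs the full spherical pole of $(G^{s})^{-1}$; this zero-divisibility bookkeeping on $[q_{0}]$ is the delicate point carried over verbatim from \cite{BS}. Finally, equality $|F(q_{1})|=|G(q_{1})|$ at an interior point forces $|\Psi|\equiv1$, hence $\Psi$ a unimodular constant and $f$ a regular M\"obius transformation of $\mathbb H^{+}$, which yields the rigidity clause.
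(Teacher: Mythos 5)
Note first that the paper itself supplies no proof of this theorem: it states only that the proof ``is the same as the one in \cite{BS} and hence is omitted,'' so the real comparison is against the Bisi--Stoppato ball argument you are transplanting. Your skeleton --- reduce to $|F|\leq|G|$, show $F,G$ map $\mathbb H^{+}$ into $\mathbb B$ via Proposition \ref{prop:Quotient Relation} and the pointwise identity $|b+\overline d|^{2}-|b-d|^{2}=4\,\mathrm{Re}(b)\,\mathrm{Re}(d)$, then factor $F=G\ast\Psi$ and invoke the maximum modulus principle --- is indeed the intended shape of that argument, and the pointwise reduction step is correct because $\mathbb H^{+}$ is axially symmetric, so the transformed point stays in $\mathbb H^{+}$.

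The gap is exactly the step you flag and then defer: the regularity of $\Psi=G^{-\ast}\ast F$ on all of $\mathbb H^{+}$. Your description of what has to be checked is off, and as stated it is not bookkeeping that can be ``carried over verbatim.'' Since $(q+\overline{q_0})$ and $(q-q_0)$ $\ast$-commute, one has $G^{-\ast}\ast F=(q-q_0)^{-\ast}\ast(q+\overline{q_0})\ast F$, and this is regular on $\mathbb H^{+}$ if and only if $F$ vanishes at the single point $q_0$ itself --- not merely somewhere on $[q_0]$, and not because $G^{c}\ast F$ ``absorbs'' the spherical zero of $G^{s}$: the factor $G^{c}$ already supplies the zero at $\overline{q_0}$, and the missing zero at $q_0$ must come from $F$. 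A priori, Proposition \ref{prop:Quotient Relation} only gives that $F=(f+\overline{f(q_0)})^{-\ast}\ast(f-f(q_0))$ vanishes at $T_{f+\overline{f(q_0)}}^{-1}(q_0)\in[q_0]$, which for a generic left $\ast$-quotient is a \emph{different} point of the sphere, in which case the factorization $F=G\ast\Psi$ would simply be false. What saves the argument is a computation you must actually carry out: writing $f(q)=\sum q^{n}a_{n}$, the conjugating factor is $(f+\overline{f(q_0)})^{c}(q_0)=f^{c}(q_0)+f(q_0)=\sum q_0^{n}\,2\,\mathrm{Re}(a_{n})$, a convergent series in $q_0$ with \emph{real} coefficients, hence an element of $\mathbb C_{I_0}$ that commutes with $q_0$; therefore $T_{f+\overline{f(q_0)}}$ fixes $q_0$ and $F(q_0)=0$ exactly. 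Then $F=(q-q_0)\ast F_{1}$ and $\Psi=(q+\overline{q_0})\ast F_{1}$ is manifestly regular, after which your boundary estimate ($|G|\to1$ as $\mathrm{Re}(q)\to0$ and as $|q|\to\infty$), the maximum principle for the bounded regular function $\Psi$, and the rigidity clause from $|\Psi|\equiv1$ all go through. Without this verification the proposal is incomplete at its only genuinely quaternionic point.
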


For every  $\gamma\in (0,1)$, we denote by $\mathcal{S}_{\gamma}$ the non-tangential cone at the boundary point $0$ of $\mathbb H^+$ given by
$$\mathcal{S}_{\gamma}:=\big\{q\in\mathbb H^+:{\rm{Re}}(q)>\gamma |q|\big\}.$$

\begin{thm}\label{JC-half space}{\bf(Julia-Carath\'{e}odory)}
Let $f:\mathbb H^+\rightarrow\mathbb H^+$ be a regular function and set
$$c:=\inf\bigg\{\frac{{\rm{Re}}f(q)}{{\rm{Re}}(q)}:q\in\mathbb H^{+}\bigg\}\geq0.$$ Then  the following  hold true:
\begin{enumerate}
\item[(i)]  for any $q\in\mathbb H^+$, $$\quad {\rm{Re}}f(q)\geq c\,{\rm{Re}}(q);$$
\item[(ii)]
 for any $\gamma\in (0,1)$,  $$\mathop{\lim_{|q|\rightarrow\infty}}_{q\in \mathcal{S}_{\gamma}}q^{-1}f(q)
=\mathop{\lim_{|q|\rightarrow\infty}}_{q\in \mathcal{S}_{\gamma}}\frac{{\rm{Re}}f(q)}{{\rm{Re}}(q)}
=c;$$
\item[(iii)]  for any $\gamma\in (0,1)$,
 $$\mathop{\lim_{|q|\rightarrow\infty}}_{q\in \mathcal{S}_{\gamma}}f'(q)=c. $$
\end{enumerate}
\end{thm}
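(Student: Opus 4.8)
The plan is to combine the half-space Schwarz--Pick theorem (Theorem \ref{Schwarz-Pick}) with the splitting lemma, reducing the non-radial content of the statement to the classical Julia--Carath\'eodory theorem at the boundary point $\infty$ of a half-plane, in the same spirit in which the ball case was reduced via Lemma \ref{Lindelof} and \cite{Sarason2}. Part (i) needs no work: by the very definition of $c$ as an infimum and since $\mathrm{Re}(q)>0$ on $\mathbb H^+$, one has $\mathrm{Re}f(q)\ge c\,\mathrm{Re}(q)$ for every $q$. For the setup I fix $I\in\mathbb S$ and $J\in\mathbb S$ with $J\perp I$ and apply the splitting lemma to write $f_I=F+GJ$ with $F,G\colon\mathbb C_I^{+}\to\mathbb C_I$ holomorphic, where $\mathbb C_I^{+}=\mathbb H^+\cap\mathbb C_I$. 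Since $\mathrm{Re}(GJ)=0$, we get $\mathrm{Re}\,F=\mathrm{Re}\,f>0$, so $F$ is a holomorphic self-map of the right half-plane $\mathbb C_I^{+}\cong\mathbb C^{+}$; this is the object I will feed to the classical theory.

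The first quantitative input is a monotonicity along the positive real axis. Evaluating Theorem \ref{Schwarz-Pick} at two real points $s,t>0$ (where the regular products collapse to pointwise products, because the diffeomorphisms of Proposition \ref{prop:Quotient Relation} fix real points) gives $\frac{|f(s)-f(t)|}{|f(s)+\overline{f(t)}|}\le\frac{|s-t|}{s+t}$. Squaring, subtracting from $1$, and using the elementary identity $|a+\bar b|^{2}-|a-b|^{2}=4\,\mathrm{Re}(a)\,\mathrm{Re}(b)$ yields $\frac{\mathrm{Re}f(s)\,\mathrm{Re}f(t)}{|f(s)+\overline{f(t)}|^{2}}\ge\frac{st}{(s+t)^{2}}$, which after rearrangement shows that $\phi(s):=\mathrm{Re}f(s)/s$ is non-increasing in $s$. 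Hence $L:=\lim_{s\to+\infty}\phi(s)=\inf_{s>0}\phi(s)$ exists, and the same inequality forces $|\mathrm{Im}f(s)|/s\to0$, so that $s^{-1}f(s)\to L$ along the reals. To identify $L$ with $c$ I will run the computation with a general $q$ and real $t$, clear the diffeomorphisms $T_f$ by replacing $q$ with $T^{-1}q$ (legitimate since these maps preserve modulus and real part, hence preserve $\mathcal S_\gamma$), arriving at the quaternionic Julia inequality $\frac{\mathrm{Re}f(q)\,\mathrm{Re}f(t)}{|f(q)+\overline{f(t)}|^{2}}\ge\frac{t\,\mathrm{Re}(q)}{|q|^{2}+2t\,\mathrm{Re}(q)+t^{2}}$; letting $t\to\infty$ and using $s^{-1}f(s)\to L$ gives $\mathrm{Re}f(q)\ge L\,\mathrm{Re}(q)$ for all $q$, whence $c\ge L$, while trivially $c\le L$ by restricting the infimum to the reals. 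Thus $c=L$, and by the splitting reduction the half-plane dilation $c_I=\lim_{x\to\infty}\mathrm{Re}F(x)/x=\lim_{x\to\infty}\mathrm{Re}f(x)/x$ is slice-independent and equals $c$.

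With $c_I=c$ fixed on every slice, the classical Julia--Carath\'eodory theorem applied to each $F$ supplies the non-tangential limits $F(z)/z\to c$ and $F'(z)\to c$ as $z\to\infty$ inside any cone of $\mathbb C_I^{+}$, and in particular $\mathrm{Re}f(q)/\mathrm{Re}(q)=\mathrm{Re}F(z)/\mathrm{Re}(z)\to c$, which is the first limit in (ii). The remaining limit $q^{-1}f(q)\to c$ amounts to showing the off-diagonal part satisfies $z^{-1}G(z)\to0$ non-tangentially; I will obtain this from the self-map constraint $\mathrm{Re}F>0$ together with $|f|^{2}=|F|^{2}+|G|^{2}$ and the already-established $s^{-1}f(s)\to c$, upgraded off the real axis through a Lindel\"of principle at $\infty$ (the half-space analogue of Lemma \ref{Lindelof}), after checking that $q^{-1}f(q)$ is bounded on each $\mathcal S_\gamma$ via the Julia inequality above. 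For (iii) I will use the expansion of Theorem \ref{direction derivative}: along the reals $f'(x)=\partial_s f(x)$ reduces to the slice datum $F'(x)$ up to the $GJ$-term, so $F'(x)\to c$ and $G'(x)\to0$ give $f'(x)\to c$ radially, and the non-tangential statement follows by combining $F'(z)\to c$ with the representation $f'(q_0)=A_1+2\,\mathrm{Im}(q_0)A_2$ and a Lindel\"of/normal-families boundedness argument for $f'$ on $\mathcal S_\gamma$.

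The step I expect to be the main obstacle is precisely this passage from radial to genuinely non-tangential limits in the quaternionic cones. The slice reduction only produces data on one plane $\mathbb C_I$ at a time, and the Schwarz--Pick comparison is contaminated by the diffeomorphisms $T_f$, so one must verify both that the per-slice limits cohere \emph{uniformly} in $I$ and that the purely quaternionic off-diagonal terms $G$ and $G'$ (which have no complex analogue) vanish to the correct order. Controlling these coherently, presumably through the representation formula together with the half-space Lindel\"of principle, is where the real difficulty lies, and it is exactly the non-commutative feature that prevents the theorem from being a formal consequence of its complex prototype.
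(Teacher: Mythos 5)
Your part (i) and your radial analysis are fine: evaluating the half-space Schwarz--Pick inequality at real points (where the $T$-diffeomorphisms act trivially), the identity $|a+\bar b|^2-|a-b|^2=4\,\mathrm{Re}(a)\,\mathrm{Re}(b)$, the monotonicity of $\mathrm{Re}f(s)/s$, and the identification $c=L$ via the Julia-type inequality all check out. The genuine gap is exactly where you place it yourself, and it is not a minor loose end but the heart of the theorem: you never actually pass from radial (or single-slice) limits to the limit over the full four-real-dimensional cone $\mathcal S_\gamma$. Your plan rests on two unproven ingredients --- a Lindel\"of principle at the boundary point $\infty$ of $\mathbb H^+$, and uniformity over $I\in\mathbb S$ of the slice-wise limits of $F$, $G$, $F'$, $G'$ obtained from the classical Julia--Carath\'eodory theorem --- and the word ``presumably'' in your last paragraph is doing all the work. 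In particular, the classical theorem applied slice by slice gives, for each fixed $I$, a limit inside planar Stolz regions of $\mathbb C_I^+$, but the statement $\lim_{|q|\to\infty,\ q\in\mathcal S_\gamma}$ requires these limits to be attained at a rate independent of $I$, and nothing in your argument controls that.

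The paper's proof sidesteps all of this with one move you did not find: instead of fixing the comparison point and letting $|q|\to\infty$, it sets $g(q)=f(q)-cq$ (so $\mathrm{Re}\,g\ge 0$ and $\inf_{q}\mathrm{Re}\,g(q)/\mathrm{Re}(q)=0$), applies Schwarz--Pick to $g$ at an arbitrary base point $q_0$, and extracts the bound
$$\big|q^{-1}g(q)\big|\le\frac{|{\rm{Im}}\,g(q_0)|}{|q|}+\frac1{\gamma}\Big(1+\frac{|q_0|}{|q|}\Big)^2\frac{{\rm{Re}}\,g(q_0)}{{\rm{Re}}(q_0)},\qquad q\in\mathcal S_\gamma,$$
which depends on $q$ only through $|q|$ and $\mathrm{Re}(q)$ and hence holds uniformly on the whole quaternionic cone with no slicing. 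Choosing $q_0$ so that $\mathrm{Re}\,g(q_0)/\mathrm{Re}(q_0)\le\gamma\epsilon$ gives $\limsup|q^{-1}g(q)|\le\epsilon$ directly; no Lindel\"of principle, no splitting, no classical Julia--Carath\'eodory theorem is needed. Part (iii) then follows from a Cauchy-integral estimate of $f'-c$ over circles $\partial\mathbb B(q,\gamma|q|)\cap\mathbb C_I$ contained in a slightly larger cone, again quantitatively and uniformly in $I$. If you want to salvage your route, the honest fix is to prove the half-space Lindel\"of lemma with explicit uniformity in $I$ (e.g.\ via the representation formula on the axially symmetric domain $\mathbb H^+$); but the paper's choice of a freely varying $q_0$ is both shorter and structurally the reason the theorem is not ``a formal consequence of its complex prototype.''
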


\begin{proof}
We put
\begin{equation}\label{def:g-c-45}
g(q):=f(q)-cq,\qquad \forall\,q\in\mathbb H^{+},
\end{equation}
Then by definition ${\rm{Re}}\,g(q)\geq0$ for all $q\in\mathbb H^{+}.$
Moreover, we may assume that
$${\rm{Re}}\,g(q)>0,\qquad \forall\,q\in\mathbb H^{+},$$
in virtue of the maximum principle for real parts of regular functions, see Lemma 2 in \cite{RW1}. Otherwise, $g(q)=It_0$ for some $t_0\in\mathbb R$ and some $I\in\mathbb S$. Thus the results are obvious.

It follows from the Schwarz-Pick theorem that for all $q,\,q_0\in\mathbb H^+$,
$$\Big|\big(g(q)+\overline{g(q_0)}\big)^{-\ast}\ast\big(g(q)-g(q_0)\big)\Big|
\leq\big|(q+\overline{q_0})^{-\ast}\ast(q-q_0)\big|,$$
which is equivalent to
$$\frac{\big| g\circ T_{g+\overline{g(q_0)}}(q)-g(q_0)\big|}
{\big|g\circ T_{g+\overline{g(q_0)}}(q)+\overline{g(q_0)}\big|}
\leq
\frac{\big|T_{Id+\overline{q_0}}(q)-q_0\big|}
{\big|T_{Id+\overline{q_0}}(q)+\overline{q_0}\big|}
,$$
or
\begin{equation}\label{212}
\frac{\big| g(q)-g(q_0)\big|}
{\big|g(q)+\overline{g(q_0)}\big|}
\leq \frac{\big|p-q_0\big|}{\big|p+\overline{q_0}\big|}=:r,
\end{equation}
where
\begin{equation}\label{def:p-T-23}
p=T_{Id+\overline{q_0}}\circ T_{g^c+g(q_0)}(q).
\end{equation}

We set $$h(q)=\frac{g(q)-{\rm{Im}}g(q_0)}{{\rm{Re}}g(q_0)}.$$
Then
(\ref{212}) becomes
\begin{equation}\label{212-1}
\frac{\big| h(q)-1\big|}
{\big|h(q)+1\big|}
\leq r=\frac{\big|p-q_0\big|}{\big|p+\overline{q_0}\big|}.
\end{equation}
Consequently,
\begin{equation}\label{213}
|h(q)|\leq\frac{1+r}{1-r}=\frac{(1+r)^2}{1-r^2}
=\frac{\big(|p+\overline{q_0}|+|p-q_0|\big)^2}
{|p+\overline{q_0}|^2-|p-q_0|^2}.
\end{equation}
By the definition of $p$ in (\ref{def:p-T-23}), we have
$$|p|=|q|,\qquad \textrm{Re}(p)=\textrm{Re}(q).$$
so that
(\ref{213}) leads to
\begin{equation}\label{213-1}
\begin{split}
|h(q)|
&\leq\frac{\big(|q|+|q_0|\big)^2}{{\rm{Re}}(q){\rm{Re}}(q_0)}.
\end{split}
\end{equation}
This implies that
\begin{equation}\label{214}
\begin{split}
|q^{-1}g(q)|& \leq\big|q^{-1}{\rm{Im}}g(q_0)\big|+\big|q^{-1}(g(q)-{\rm{Im}}g(q_0))\big|\\
&=\frac{|{\rm{Im}}g(q_0)|}{|q|}+|h(q)|\frac{{|\rm{Re}}g(q_0)|}{|q|}\\
&\leq\frac{|{\rm{Im}}g(q_0)|}{|q|}+
\frac{\big(|q|+|q_0|\big)^2}{{\rm{Re}}(q){\rm{Re}}(q_0)}\frac{{\rm{Re}}g(q_0)}{|q|}.
\end{split}
\end{equation}
Then for all $q\in\mathcal{S}_{\gamma}$ we have
\begin{equation}\label{215}
\begin{split}
|q^{-1}g(q)|&\leq\frac{|{\rm{Im}}g(q_0)|}{|q|}+
\frac{\big(|q|+|q_0|\big)^2}{\gamma |q|^2}\frac{{\rm{Re}}g(q_0)}{{\rm{Re}}(q_0)}\\
&=\frac{|{\rm{Im}}g(q_0)|}{|q|}+
\frac1{\gamma} \bigg(1+\frac{|q_0|}{|q|}\bigg)^2\frac{{\rm{Re}}g(q_0)}{{\rm{Re}}(q_0)}.
\end{split}
\end{equation}

By the definition of $g$ in (\ref{def:g-c-45}), it is evident that
$$\inf\bigg\{\frac{{\rm{Re}}g(q)}{{\rm{Re}}\,q}:q\in\mathbb H^{+}\bigg\}=0.$$
For any $\epsilon>0$, there thus exists a $q_0\in\mathbb H^{+}$ such that
$$\frac{{\rm{Re}}\; g(q_0)}{{\rm{Re}}(q_0)}\leq\gamma\,\epsilon,$$
and hence by letting $|q|\rightarrow\infty$ in (\ref{215}) we obtain
$$\mathop{\limsup_{|q|\rightarrow\infty}}_{q\in \mathcal{S}_{\gamma}}\big|q^{-1}g(q)\big|\leq \epsilon.$$
Namely,
 \begin{equation}\label{216}
 \mathop{\lim_{|q|\rightarrow\infty}}_{q\in \mathcal{S}_{\gamma}}q^{-1}g(q)=0,
\end{equation}
since  $\epsilon $ is an arbitrary positive number. At the same time, for all $q\in\mathcal{S}_{\gamma}$,
$$\frac{{\rm{Re}}g(q)}{{\rm{Re}}(q)}\leq\frac{|g(q)|}{|q|}\,\frac{|q|}{{\rm{Re}}(q)}
\leq\frac1{\gamma}\big|q^{-1}g(q)\big|,$$
which together with $(\ref{216})$ implies that
$$\mathop{\lim_{|q|\rightarrow\infty}}_{q\in \mathcal{S}_{\gamma}}\frac{{\rm{Re}}g(q)}{{\rm{Re}}(q)}=0.$$
Now the proof of the assertions $(\textrm{i})$ and $(\textrm{ii})$ is complete.

 It remains to prove $(\textrm{iii})$. For any given $\gamma\in(0,1/2)$, it is easy to see that the closed ball $$\overline{\mathbb B}(q,\gamma|q|)\subset\mathcal{S}_{\gamma},$$
whenever $q\in\mathcal{S}_{2\gamma} $. For any $I\in\mathbb S$ and
$q\in\mathcal{S}_{2\gamma}\cap\mathbb C_I$ we have
\begin{equation}\label{Formula:cauchy-integral}
f'(q)-c=\frac1{2\pi I}
\int_{\partial\mathbb B(q,\gamma|q|)\cap\mathbb C_I}\frac{ds}{(s-q)^2}\big(f(s)-cs\big).
\end{equation}
Notice that for any $s\in \partial\mathbb B(q,\gamma|q|)\cap\mathbb C_I$ we have
$|s-q|=\gamma |q|$
so that
$$(1-\gamma)|q|\leq |s|\leq (1+\gamma)|q|.$$
Since $| f(s)-cs\big|=|s|\big|s^{-1} f(s)-c\big|$ and
$$
 \partial\mathbb B(q,\gamma|q|)\cap\mathbb C_I\subseteq \big\{s\in \mathcal{S}_\gamma: |s|\geq (1-\gamma)|q|\big\}
$$
 it follows from (\ref{Formula:cauchy-integral}) that
\begin{equation}\label{217}
\begin{split}
\big|f'(q)-c\big|
\leq \Big(1+\frac1{\gamma}\Big)\mathop{\sup\limits_{|s|\geq (1-\gamma)|q|)}}_{s\in \mathcal{S}_{\gamma}}\big|s^{-1} f(s)-c\big|,
\end{split}
\end{equation}
which approaches to $0$ as $q$ tends to $\infty$ in $\mathcal{S}_{2\gamma}$
due to assertion (ii). Consequently,
$$\mathop{\lim_{|q|\rightarrow\infty}}_{q\in \mathcal{S}_{2\gamma}}f'(q)=c.$$
Now the proof is complete.
\end{proof}
\begin{rem}
There exists a regular function $f:\mathbb H^+\rightarrow\mathbb H^+$ such that
$$c:=\inf\bigg\{\frac{{\rm{Re}}f(q)}{{\rm{Re}}(q)}:q\in\mathbb H^{+}\bigg\}=0.$$
A simple example is the constant function $f(q)=1$. Furthermore, we shall also denote by $f'(\infty)$ the quantity
$$\inf\bigg\{\frac{{\rm{Re}}f(q)}{{\rm{Re}}(q)}:q\in\mathbb H^{+}\bigg\}$$
due to ${\rm{(iii)}}$ in the preceding theorem.
\end{rem}

Some quite interesting consequences of the preceding theorem are in order.
The first one is the following right half-space version of boundary Schwarz lemma.
\begin{thm}
Let $f:\mathbb H^+\rightarrow\mathbb H^+$ be a regular function with a fixed point $q_0\in\mathbb H^+$. Then
$$f'(\infty)\leq1$$
with equality if and only if $f(q)=q$ for all $q\in\mathbb H^+$.
\end{thm}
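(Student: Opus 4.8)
The plan is to read the inequality off directly from the fixed point and then to pin down the equality case using assertion (i) of Theorem \ref{JC-half space} together with the maximum principle for real parts. The whole argument is short: essentially everything is already contained in the Julia-Carath\'{e}odory theorem for $\mathbb H^+$ proved just above.

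First I would observe that the inequality $f'(\infty)\leq1$ is almost immediate. Since $q_0$ is a fixed point, $f(q_0)=q_0$ and hence $\frac{{\rm{Re}}f(q_0)}{{\rm{Re}}(q_0)}=1$. Because $f'(\infty)=c$ is by definition the infimum of $\frac{{\rm{Re}}f(q)}{{\rm{Re}}(q)}$ over all $q\in\mathbb H^+$, and $q_0$ is one such competitor, it follows at once that $c\leq1$.

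For the equality case I would suppose $c=1$ and set $h(q):=f(q)-q$. Assertion (i) of Theorem \ref{JC-half space}, applied with $c=1$, gives ${\rm{Re}}\,h(q)={\rm{Re}}f(q)-{\rm{Re}}(q)\geq0$ for every $q\in\mathbb H^+$, while the fixed point condition yields ${\rm{Re}}\,h(q_0)={\rm{Re}}(q_0)-{\rm{Re}}(q_0)=0$. Thus the real part of the regular function $h$ attains its infimum $0$ at the interior point $q_0$. Applying the maximum principle for real parts of regular functions (Lemma 2 in \cite{RW1}) to $-h$, one concludes that $h$ is constant on $\mathbb H^+$; since $h(q_0)=0$, this forces $h\equiv0$, that is, $f(q)=q$ for all $q\in\mathbb H^+$. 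The reverse implication is trivial, since the identity map clearly satisfies $c=1$ and fixes $q_0$.

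The only delicate point is the invocation of the maximum principle at an interior minimum to upgrade ${\rm{Re}}\,h\geq0$, ${\rm{Re}}\,h(q_0)=0$ into $h\equiv0$. This is precisely the device already employed at the start of the proof of Theorem \ref{JC-half space}, where ${\rm{Re}}\,g\equiv0$ was used to force $g$ to reduce to a purely imaginary constant, so no new machinery is needed here. I therefore do not expect any genuine obstacle: the entire content of the statement resides in assertion (i) of the $\mathbb H^+$ Julia-Carath\'{e}odory theorem and in the rigidity supplied by the maximum principle.
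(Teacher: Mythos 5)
Your proof is correct and follows essentially the same route as the paper: the inequality comes from evaluating the defining infimum at the fixed point, and the equality case is handled by noting that the nonnegative slice harmonic function $\mathrm{Re}\big(f(q)-q\big)$ attains an interior minimum at $q_0$ and invoking the maximum principle for real parts. Your write-up merely makes explicit the step (via assertion (i)) that the paper leaves implicit.
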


\begin{proof}
The inequality $f'(\infty)\leq1$ immediately follows from Theorem
$\ref{JC-half space}$.
If $f'(\infty)=1$, then the nonnegative slice harmonic function ${\rm{Re}}\big(f(q)-q\big)$ attains its minimum at interior point $q_0\in\mathbb H^+$. Thus the desired result easily follows from the maximum principle.
\end{proof}

Another consequence of Theorem
$\ref{JC-half space}$ is the following result concerning  the asymptotic behavior at infinity of regular self-mappings of $\mathbb H^+$.

\begin{cor}
Let $f:\mathbb H^+\rightarrow\mathbb H^+$ be a regular function. Then
 there exists a positive number $\beta$, finite or infinite, such that for each $\gamma\in (0,1)$, we have
$$\mathop{\lim_{|q|\rightarrow\infty}}_{q\in \mathcal{S}_{\gamma}}qf(q)=\beta. $$
\end{cor}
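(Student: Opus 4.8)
The plan is to deduce this from the half-space Julia--Carath\'eodory theorem (Theorem \ref{JC-half space}) applied not to $f$ but to its \emph{regular} reciprocal. In the complex case one simply observes that $1/f$ is again a self-map of the half-plane and reads off the asymptotics of $zf(z)$ by inverting the limit $z^{-1}(1/f)(z)\to c$; here the pointwise reciprocal $q\mapsto f(q)^{-1}$ is not slice regular, so I would work with $F:=f^{-\ast}$ instead. The first step is to check that $F$ is a genuine regular self-mapping of $\mathbb H^{+}$. Since $f(\mathbb H^{+})\subseteq\mathbb H^{+}$, $f$ never vanishes on $\mathbb H^{+}$, and because the $2$-spheres $x+y\mathbb S$ preserve the real part, $f^{s}$ has no zeros in $\mathbb H^{+}$; hence $F$ is regular throughout $\mathbb H^{+}$. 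Moreover, by Proposition \ref{prop:Quotient Relation} with $g\equiv1$ we have $F(q)=f(T_{f}(q))^{-1}$, where $T_{f}(q)$ lies on the same sphere as $q$ and thus in $\mathbb H^{+}$, so that $f(T_{f}(q))\in\mathbb H^{+}$ and $F(q)\in\mathbb H^{+}$.

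The next step is the algebraic identity that turns $qf(q)$ into the object governed by Theorem \ref{JC-half space}. Writing $\mathrm{Id}$ for the identity function and using Proposition \ref{prop:RP}, one gets $(\mathrm{Id}\ast f)(q)=q\,f(q^{-1}qq)=qf(q)$, so $h:=\mathrm{Id}\ast f$ is exactly the regular function $q\mapsto qf(q)$. Its regular reciprocal is $h^{-\ast}=f^{-\ast}\ast\iota$, where $\iota(q)=q^{-1}$; this I would verify directly from associativity and $f\ast f^{-\ast}=1$ via $h\ast(f^{-\ast}\ast\iota)=\mathrm{Id}\ast(f\ast f^{-\ast})\ast\iota=\mathrm{Id}\ast\iota=1$, the last equality being $(\mathrm{Id}\ast\iota)(q)=q\,\iota(q)=1$ by Proposition \ref{prop:RP}. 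A one-line computation with the same proposition gives $(f^{-\ast}\ast\iota)(q)=f^{-\ast}(q)\,\iota\big(f^{-\ast}(q)^{-1}q f^{-\ast}(q)\big)=q^{-1}F(q)$, so altogether $(qf(q))^{-\ast}=q^{-1}F(q)$.

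Now I would apply Theorem \ref{JC-half space} to $F$. Put $c_{F}:=\inf\{{\rm{Re}}\,F(q)/{\rm{Re}}(q):q\in\mathbb H^{+}\}\in[0,\infty)$ and set $\beta:=1/c_{F}\in(0,\infty]$. Assertion (ii) yields $q^{-1}F(q)\to c_{F}$ as $|q|\to\infty$ in $\mathcal S_{\gamma}$, for every $\gamma\in(0,1)$. By the identity above this reads $h^{-\ast}(q)=h(T_{h}(q))^{-1}\to c_{F}$, whence $h(T_{h}(q))\to\beta$ when $c_{F}>0$ and $|h(T_{h}(q))|\to\infty$ when $c_{F}=0$. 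Finally I would transfer the limit through $T_{h}$: by Proposition \ref{prop:Quotient Relation}, $T_{h}$ is a diffeomorphism of $\mathbb H^{+}$ with inverse $T_{h^{c}}$, and both preserve the modulus and the real part, hence preserve every cone $\mathcal S_{\gamma}$ together with the approach $|q|\to\infty$. Substituting $q'=T_{h}(q)$ then gives $\lim_{|q'|\to\infty,\,q'\in\mathcal S_{\gamma}}q'f(q')=\beta$, with $\beta$ the same for all $\gamma$ since $c_{F}$ does not depend on $\gamma$.

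The main obstacle is exactly the non-regularity of the pointwise reciprocal, which makes the one-line complex argument unavailable: everything hinges on replacing $1/f$ by $f^{-\ast}$ and on the two structural facts (Propositions \ref{prop:RP} and \ref{prop:Quotient Relation}) needed to identify $(qf(q))^{-\ast}$ with $q^{-1}F(q)$ and to realize $f^{-\ast}$ as $f\circ T_{f}$ inverted. I would take some care to note that $h=\mathrm{Id}\ast f$ has no zeros in $\mathbb H^{+}$ (it equals $qf(q)$ with $q\neq0$ and $f(q)\neq0$), so that $T_{h}$ is defined and bijective on all of $\mathbb H^{+}$ and the concluding change of variable is legitimate; the positivity of $\beta$ then follows automatically, as $c_{F}$ is a finite nonnegative real.
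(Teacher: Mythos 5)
Your proof is correct and takes essentially the same route as the paper, which simply applies Theorem \ref{JC-half space} to the regular reciprocal $f^{-\ast}$ after noting that $f$, and hence $f^{-\ast}$, has no zeros. You merely supply the details the paper leaves implicit, namely the identity $(\mathrm{Id}\ast f)^{-\ast}(q)=q^{-1}f^{-\ast}(q)$ and the transfer of the limit through the modulus- and real-part-preserving diffeomorphism $T_{h}$.
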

\begin{proof}
By assumption, $f$ has no zeros and so does $f^{-\ast}$ $($see Proposition 3.9 in \cite{GSS}$)$. The result immediately follows by applying  the preceding theorem to $f^{-\ast}$.
\end{proof}

The preceding corollary in turn results in the quaternionic version of Burns-Krantz theorem.
\begin{thm}\label{218-1}{\bf(Burns-Krantz)}
Let $f:\mathbb H^+\rightarrow\mathbb H^+$ be a regular function. If there exists a sequence $\{q_n\}_{n\in\mathbb N}\subset\mathcal{S}_{\gamma}$ for some $\gamma\in(0,1)$ converging to $\infty$ as $n$ tends to $\infty$, such that
\begin{equation}\label{Quater-Burns-Krantz}
f(q_n)=q_n+o\Big(\frac1{q_n}\Big),\qquad as\;\; n\rightarrow \infty,
\end{equation}
 then $f(q)=q$ for all $q\in\mathbb H^+$.
\end{thm}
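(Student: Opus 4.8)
The plan is to transfer everything onto the auxiliary function $g(q):=f(q)-q$, which is again regular on $\mathbb H^+$, and to extract from the hypothesis two mutually incompatible facts: one forcing $\mathrm{Re}\,g\ge0$, the other forcing $g$ to decay too fast along $\{q_n\}$. First I would determine the Julia--Carath\'{e}odory constant $c:=\inf\{\mathrm{Re}f(q)/\mathrm{Re}(q):q\in\mathbb H^+\}$. Writing $f(q_n)=q_n+\varepsilon_n$ with $\varepsilon_n=o(1/q_n)$ (so $|\varepsilon_n||q_n|\to0$) and using the cone membership $q_n\in\mathcal S_\gamma$, i.e. $\mathrm{Re}(q_n)>\gamma|q_n|$, I pass to real parts: $\mathrm{Re}f(q_n)/\mathrm{Re}(q_n)=1+\mathrm{Re}(\varepsilon_n)/\mathrm{Re}(q_n)$, and the error is bounded by $|\varepsilon_n|/(\gamma|q_n|)\to0$. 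Hence $\mathrm{Re}f(q_n)/\mathrm{Re}(q_n)\to1$. Since $q_n\to\infty$ inside $\mathcal S_\gamma$, Theorem \ref{JC-half space}(ii) identifies this limit with $c$, so $c=1$; then Theorem \ref{JC-half space}(i) gives $\mathrm{Re}f(q)\ge\mathrm{Re}(q)$ throughout $\mathbb H^+$, that is $\mathrm{Re}\,g\ge0$.

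Next I would run the dichotomy supplied by the maximum principle for real parts of regular functions (Lemma 2 in \cite{RW1}), exactly as in the proof of Theorem \ref{JC-half space}. If $\mathrm{Re}\,g$ vanishes at some interior point, then, being a nonnegative function attaining an interior minimum, $\mathrm{Re}\,g\equiv0$, whence $g\equiv It_0$ for some $t_0\in\mathbb R$, $I\in\mathbb S$; evaluating along the sequence gives $It_0=g(q_n)=\varepsilon_n\to0$, so $t_0=0$ and $g\equiv0$, i.e. $f\equiv\mathrm{id}$. Otherwise $\mathrm{Re}\,g>0$ everywhere, so $g$ is a zero-free regular self-mapping of $\mathbb H^+$, and the preceding corollary applies to $g$: there exists $\beta\in(0,+\infty]$ with $\lim_{|q|\to\infty,\,q\in\mathcal S_\gamma}qg(q)=\beta$. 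Testing along $q_n$ and taking moduli, $|q_ng(q_n)|=|q_n||\varepsilon_n|=|q_n|\,o(1/|q_n|)\to0$, which forces $\beta=0$ and contradicts the strict positivity of $\beta$. Thus only the first alternative can occur, and $f\equiv\mathrm{id}$.

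I expect the main obstacle to be the first step, namely converting the single scalar asymptotic $f(q_n)=q_n+o(1/q_n)$ into the exact value $c=1$, because the non-commutativity makes the raw products $q_n\varepsilon_n$, $\varepsilon_n q_n$ awkward to handle. The safe route is to work only with real parts and moduli and to invoke the cone inequality $\mathrm{Re}(q_n)>\gamma|q_n|$ at every estimate, so that each error term is genuinely absorbed after dividing by $\mathrm{Re}(q_n)$; this is also what makes the contradiction in the second alternative robust, since it rests only on $|q_ng(q_n)|\to0$ against $\beta>0$. The remaining verifications are routine: that $g=f-\mathrm{id}$ is regular, that $\mathrm{Re}\,g>0$ indeed makes $g$ zero-free (so that $g^{-\ast}$ exists and the corollary is applicable), and that the purely imaginary constant in the degenerate case is pinned to $0$ by the decay of $g$ along $\{q_n\}$.
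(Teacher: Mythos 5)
Your proof is correct and follows essentially the same route as the paper: deduce $c=1$ from Theorem \ref{JC-half space}(ii), pass to $g=f-\mathrm{id}$ with $\mathrm{Re}\,g\ge0$ by part (i), and play the decay $q_ng(q_n)\to0$ against the corollary asserting $\lim_{|q|\to\infty,\,q\in\mathcal S_\gamma}qg(q)=\beta>0$. The only difference is that you spell out the dichotomy (either $\mathrm{Re}\,g>0$ everywhere, or $g$ is an imaginary constant killed by the decay) which the paper's two-line proof leaves implicit; this is a welcome clarification but not a different argument.
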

\begin{proof}
By $\rm{(ii)}$ in Theorem \ref{JC-half space},
$$c:=\inf\bigg\{\frac{{\rm{Re}}f(q)}{{\rm{Re}}(q)}:q\in\mathbb H^{+}\bigg\}=1.$$
Therefore, the regular function $g(q)=f(q)-q$ maps $\mathbb H^+$ into $\overline{\mathbb H}^+$, and satisfies that
$$\mathop{\lim_{|q|\rightarrow\infty}}_{q\in \mathcal{S}_{\gamma}}qg(q)=0. $$
Now it follows from the preceding corollary that $g\equiv 0$ and $f(q)=q$ for all $q\in\mathbb H^+$.
\end{proof}

The third consequence of Theorem
 $\ref{JC-half space}$ is the following rigidity theorem.

\begin{cor}\label{218}
Let $f:\mathbb H^+\rightarrow\overline{\mathbb H}^+$ be a regular function. If there exist a $I\in\mathbb S$ and some $\theta\in(-\frac{\pi}2,\frac{\pi}2)$ such that  $$\liminf\limits_{r\rightarrow \infty}r\big|f(re^{I\theta})\big|=0,$$
 then $f\equiv0$.
\end{cor}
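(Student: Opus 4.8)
The plan is to reduce the statement to the asymptotic description of regular self-mappings of $\mathbb{H}^+$ already obtained, after first disposing of a degenerate case via the maximum principle. Since $\mathrm{Re}\,f(q)\geq 0$ throughout the connected domain $\mathbb{H}^+$, the maximum principle for real parts of regular functions (Lemma 2 in \cite{RW1}) furnishes a dichotomy, exactly as in the proof of Theorem \ref{JC-half space}: either $\mathrm{Re}\,f\equiv 0$, in which case $f$ is a purely imaginary constant $f\equiv It_0$ with $t_0\in\mathbb{R}$ and $I\in\mathbb{S}$, or else $\mathrm{Re}\,f(q)>0$ for every $q\in\mathbb{H}^+$, so that $f$ is genuinely a regular self-mapping of the open right half-space $\mathbb{H}^+$.

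In the constant case $f\equiv It_0$ one computes $r\,\big|f(re^{I\theta})\big|=|t_0|\,r$, whose $\liminf$ as $r\to\infty$ vanishes precisely when $t_0=0$; the hypothesis therefore forces $f\equiv 0$, which is the desired conclusion. To finish I would rule out the second alternative. Assuming $f:\mathbb{H}^+\to\mathbb{H}^+$, I invoke the preceding corollary on the asymptotic behavior at infinity of regular self-mappings of $\mathbb{H}^+$: there is a positive number $\beta\in(0,+\infty]$ such that $\lim_{|q|\to\infty,\,q\in\mathcal{S}_\gamma}qf(q)=\beta$ for every $\gamma\in(0,1)$. The key geometric observation is that the entire ray $q=re^{I\theta}$ lies in such a cone: since $\theta\in(-\tfrac{\pi}{2},\tfrac{\pi}{2})$ we have $\mathrm{Re}(re^{I\theta})=r\cos\theta=(\cos\theta)\big|re^{I\theta}\big|$, so $re^{I\theta}\in\mathcal{S}_\gamma$ for any fixed $\gamma\in(0,\cos\theta)$. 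Restricting the cone limit to this ray and taking moduli, using $|qf(q)|=|q|\,|f(q)|$, yields $\lim_{r\to\infty} r\,\big|f(re^{I\theta})\big|=\beta>0$, with the value $+\infty$ when $\beta=+\infty$. In either case this contradicts the assumption $\liminf_{r\to\infty} r\,\big|f(re^{I\theta})\big|=0$, so the open-mapping alternative is impossible and the proof is complete.

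The reasoning is essentially a packaging of earlier results, so there is no serious obstacle; the points requiring care are the reduction to the two cases via the maximum principle and the fact that the limiting constant $\beta$ is \emph{strictly} positive, which is exactly what turns the cone limit into a contradiction with the vanishing $\liminf$. One should also note that the cited corollary tacitly uses that $f^{-\ast}$ is again a self-map of $\mathbb{H}^+$ (as $\mathrm{Re}\,f>0$ implies $f$ is zero-free, whence $f^s$ is zero-free and $f^{-\ast}(q)=f(T_f(q))^{-1}$ has positive real part), but this is already contained in that corollary's proof, so no additional work is needed here.
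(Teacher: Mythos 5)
Your proof is correct and follows exactly the route the paper intends: the paper states Corollary \ref{218} as ``the third consequence of Theorem \ref{JC-half space}'' without writing out details, and the intended argument is precisely your reduction via the maximum principle for real parts (constant purely imaginary case versus $\mathrm{Re}\,f>0$) followed by the preceding corollary giving $\lim_{|q|\to\infty,\,q\in\mathcal S_\gamma} qf(q)=\beta$ with $\beta$ strictly positive, which the ray $re^{I\theta}\subset\mathcal S_\gamma$ (for $\gamma<\cos\theta$) turns into a contradiction with the vanishing $\liminf$. No gaps.
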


Finally, we give the proof of the Burns-Krantz theorem.

\begin{proof}[Proof of Theorem $\ref{B-krantz}$]
Let $\varphi$ be the Cayley transformation from $\mathbb H^{+}$ to $\mathbb B$, i.e.
$$\varphi(q)=(1+q)^{-1}(1-q),\qquad \forall\, q\in \mathbb H^{+}.$$ The result immediately follows by applying the preceding corollary to the regular function $g=f\circ\varphi$.
\end{proof}

\bigskip

\bibliographystyle{amsplain}

\end{document}